\numberwithin{equation}{section}
\def\pmod #1{\ ({\rm{mod}}\ #1)}
\DeclareMathOperator\dif{d\!}
\theoremstyle{plain}
\newtheorem{thm}{Theorem}[section]
\newtheorem{lemma}[thm]{Lemma}
\newtheorem{problem}{Problem}
\newtheorem{conjecture}[thm]{Conjecture}
\theoremstyle{definition}
\patchcmd{\@settitle}{\uppercasenonmath\@title}{}{}{}
\patchcmd{\@setauthors}{\MakeUppercase}{}{}{}
\patchcmd{\section}{\scshape}{}{}{}
\begin{document}

\title
[The Diophantine Frobenius Problem revisited]
{The Diophantine Frobenius Problem revisited}

\author
[Y. Ding, W. Wang, H. Zhang] {Yuchen Ding, Weijia Wang and Hao Zhang}

\address{(Yuchen Ding) School of Mathematics,  Yangzhou University, Yangzhou 225002, People's Republic of China}
\email{ycding@yzu.edu.cn}

\address{(Weijia Wang) Morningside Center of Mathematics, Academy of Mathematics and Systems Science, Chinese Academy of
Sciences, Beijing 100190, People's Republic of China}
\email{weijiawang@amss.ac.cn}

\address{(Hao Zhang) School of Mathematics, Hunan University, Changsha 410082, People's Republic of China}
\email{zhanghaomath@hnu.edu.cn}

\keywords{Diophantine Frobenius Problem, prime number theorem, primes in arithemetic progressions, Cauchy-Schwarz inequality,  Perron formula, generalized Riemann hypothesis, Jacobsthal function.}
\subjclass[2010]{}

\begin{abstract}
Let $k\ge 2$ and $a_1, a_2, \cdots, a_k$ be positive integers with
$$
\gcd(a_1, a_2, \cdots, a_k)=1.
$$
It is proved that there exists a positive integer $G_{a_1, a_2, \cdots, a_k}$ such that every integer $n$
strictly greater than it can be represented as the form
$$
n=a_1x_1+a_2x_2+\cdots+a_kx_k, \quad (x_1, x_2, \cdots, x_k\in\mathbb{Z}_{\ge 0},~\gcd(x_1, x_2, \cdots, x_k)=1).
$$
We then investigate the size of  $G_{a_1, a_2}$ explicitly.
Our result strengthens the primality requirement of $x$'s in the classical Diophantine Frobenius Problem.
\end{abstract}
\maketitle

\section{Introduction}
Let $a_1, a_2, \cdots, a_k$ be a set of $k(\ge 2)$ positive integers with
$
\gcd(a_1, a_2, \cdots, a_k)=1.
$
It is well--known that all sufficiently large integers $n$ can be written as the form
\begin{align}\label{eq1}
n=a_1x_1+a_2x_2+\cdot\cdot\cdot+a_kx_k \quad (x_1, x_2, \cdots, x_k\in \mathbb{Z}_{\ge 0}),
\end{align}
where $\mathbb{Z}_{\ge 0}$ is the set of nonnegative integers. The Diophantine Frobenius Problem posed by Frobenius (see, e.g. \cite{RA}) asks the closed form of the minimal value $g_{a_1, a_2, \cdots, a_k}$ such that all integers $n>g_{a_1, a_2, \cdots, a_k}$ can be expressed as the form (\ref{eq1}). For $k=2$ Sylvester \cite{Sylvester} observed $g_{a_1,a_2}=a_1a_2-a_1-a_2$ and furthermore noticed that for any $0\le s\le g_{a_1,a_2}$ exactly one of $s$ and $g_{a_1,a_2}-s$ could be expressed as the desired form. For $k=3$, closed forms involving particular cases were extensively studied (see, e.g. \cite{RA}). We refer to the excellent monograph \cite{RA} of Ram\'{\i}rez Alfons\'{\i}n for a comprehensive literature on this problem.

In 2020, Ram\'{\i}rez Alfons\'{\i}n and Ska{\l}ba \cite{RS} made some considerations of the Diophantine Frobenius Problem in primes. Specifically, they were interested in the primes $p\le g_{a_1,a_2}$ with the form $a_1x_1+a_2x_2~(x_1,x_2\in \mathbb{Z}_{\ge 0})$. Suppose that $\pi_{a_1,a_2}$ is the number of such primes, then Ram\'{\i}rez Alfons\'{\i}n and Ska{\l}ba proved that for any $\varepsilon>0$ there is some constant $c_\varepsilon>0$ such that
\begin{align*}
\pi_{a_1,a_2}>c_\varepsilon \frac{g_{a_1,a_2}}{(\log g_{a_1,a_2})^{2+\varepsilon}}.
\end{align*}
The above inequality immediately deduces that $\pi_{a_1,a_2}>0$ for all sufficiently large $g_{a_1,a_2}$. Mathematical experiments then led them to the following conjecture \cite[Conjecture 2]{RS}.

\begin{conjecture}\label{conjecture1}
Let $2< a_1<a_2$ be two relatively prime integers. Then $\pi_{a_1,a_2}>0$.
\end{conjecture}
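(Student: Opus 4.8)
The plan to prove Conjecture \ref{conjecture1}, conditionally on the Generalized Riemann Hypothesis for Dirichlet $L$-functions, is as follows. (An unconditional treatment would seem to need either an effective form of the estimate of Ram\'{\i}rez Alfons\'{\i}n and Ska{\l}ba quoted above, reducing the problem to a possibly enormous finite computation, or new ideas in the range where $a_1$ and $a_2$ have comparable size.) First a reduction: from $2<a_1<a_2$ one checks directly that $a_1\le g_{a_1,a_2}=a_1a_2-a_1-a_2$ and $a_2\le g_{a_1,a_2}$, so if $a_1$ or $a_2$ is prime then it is itself a representable prime not exceeding $g_{a_1,a_2}$ and we are done. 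Hence we may assume $a_1,a_2$ are both composite and, absorbing the pairs with $\max(a_1,a_2)$ below an explicit threshold $X_0$ into a finite verification, that $a_1>X_0$ (and therefore $a_2>X_0$).

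The key structural fact is that the integers of $[1,g_{a_1,a_2}]$ representable as $a_1x_1+a_2x_2$ split, according to residue modulo $a_1$, into arithmetic progressions of common difference $a_1$. Concretely, for every $c$ with $1\le c\le a_1/2$ and $\gcd(c,a_1)=1$, each prime $p\equiv ca_2\pmod{a_1}$ with $\tfrac12 a_1a_2\le p\le g_{a_1,a_2}$ is representable, since $p=a_1\cdot\frac{p-ca_2}{a_1}+a_2\cdot c$ with both coefficients nonnegative (as $p\ge\tfrac12 a_1a_2\ge ca_2$). The residues $ca_2\bmod a_1$ being distinct and coprime to $a_1$, this yields
$$
\pi_{a_1,a_2}\ \ge\ \sum_{\substack{1\le c\le a_1/2\\ \gcd(c,a_1)=1}}\Bigl(\pi\bigl(g_{a_1,a_2};a_1,ca_2\bigr)-\pi\bigl(\tfrac12 a_1a_2;a_1,ca_2\bigr)\Bigr),
$$
where $\pi(x;q,a)$ denotes the number of primes $p\le x$ with $p\equiv a\pmod q$. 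Write $\pi(x;q,a)=\mathrm{li}(x)/\phi(q)+E(x,a)$. Since the number of admissible $c$ is $\gg\phi(a_1)$ (an elementary sieve bound, or a Jacobsthal-type estimate on gaps between integers coprime to $a_1$), and since $g_{a_1,a_2}-\tfrac12 a_1a_2=\tfrac12 a_1a_2-a_1-a_2\gg g_{a_1,a_2}$, the main term of the above sum is $\gg g_{a_1,a_2}/\log g_{a_1,a_2}$.

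The crucial point for the error term is that the lower cut-off $\tfrac12 a_1a_2$ is common to all $c$, so the error is a sum of $E(y,a)$ over a set of residue classes $a$ coprime to $a_1$, with $y\in\{g_{a_1,a_2},\tfrac12 a_1a_2\}$ fixed. Cauchy--Schwarz then bounds it by $\sqrt{\phi(a_1)}$ times the square root of the second moment $\sum_{\gcd(a,a_1)=1}|E(y,a)|^2$, and under GRH this moment is $\ll y(\log a_1y)^{A}$ for an absolute constant $A$: one passes from $\pi$ to the von Mangoldt-weighted count by partial summation, expands in Dirichlet characters modulo $a_1$ by orthogonality, and inserts the square-root bound $|\psi(y,\chi)|\ll\sqrt y(\log a_1y)^2$ coming from Perron's formula and the explicit formula. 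Thus the total error is $\ll a_1\sqrt{a_2}\,(\log a_1a_2)^{A}$. Since $a_2>a_1$ gives $g_{a_1,a_2}\asymp a_1a_2$ and $\log(a_1a_2)<2\log a_2$, the main term $\gg a_1a_2/\log(a_1a_2)$ dominates once $\sqrt{a_2}\gg(\log a_2)^{A+1}$, i.e.\ as soon as $a_2$ — hence $a_1$ — exceeds an explicit constant, which we fold into $X_0$; this forces $\pi_{a_1,a_2}>0$. (For $a_1\le X_0$ fixed, large $a_2$ is still covered by the same estimate, so the residual finite check concerns only pairs with both $a_1,a_2$ bounded.)

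I expect the main obstacle to be precisely that second-moment input: for a \emph{single} large modulus $a_1$, which here is of size $\asymp\sqrt{g_{a_1,a_2}}$, neither the Siegel--Walfisz theorem nor the Bombieri--Vinogradov theorem controls $E(y,a)$ uniformly over residue classes $a$, so a genuinely square-root-type bound for $\psi(y,\chi)$ over all characters modulo $a_1$ — equivalently, GRH — seems indispensable here; removing that hypothesis, or making the threshold $X_0$ small enough for a feasible computation, is the real difficulty.
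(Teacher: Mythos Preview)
The paper does not contain a proof of Conjecture~\ref{conjecture1}. It is stated as a conjecture of Ram\'{\i}rez Alfons\'{\i}n and Ska{\l}ba, and the paper merely records that it was confirmed by Dai, Ding and Wang \cite{DZZ}; no argument for it appears anywhere in the present manuscript. There is therefore nothing in the paper to compare your proposal against.

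As to the proposal itself: the conditional argument you sketch is essentially sound. The reduction to composite $a_1,a_2$ is correct; the observation that every prime $p\equiv ca_2\pmod{a_1}$ with $\tfrac12 a_1a_2\le p\le g_{a_1,a_2}$ and $1\le c\le a_1/2$, $\gcd(c,a_1)=1$, is representable is correct; and the second-moment bound $\sum_{a}|E(y,a)|^2\ll y(\log a_1y)^A$ under GRH, obtained via orthogonality and the bound $|\psi(y,\chi)|\ll\sqrt{y}(\log a_1y)^2$, is standard. Cauchy--Schwarz then gives the error bound $\ll a_1\sqrt{a_2}(\log a_1a_2)^A$ you claim, which is beaten by the main term $\gg a_1a_2/\log(a_1a_2)$ once $a_2$ exceeds an absolute constant.

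The genuine gap is the one you already flag: the argument is conditional on GRH and still leaves an unspecified finite verification, whereas the result cited in the paper (\cite{DZZ}) is unconditional and complete. So your proposal, even if written out in full, would not establish the conjecture as stated; it would only give a GRH-conditional version modulo a computation. Since the paper itself offers no proof, there is no alternative approach here to contrast with yours, but you should be aware that what you have outlined is strictly weaker than the result the paper attributes to \cite{DZZ}.
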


Let $\pi(t)$ be the number of primes up to $t$. On noting the antisymmetric property of the integers $n\le g_{a_1,a_2}$ of the form $a_1x_1+a_2x_2~(x_1,x_2\in \mathbb{Z}_{\ge 0})$, Ram\'{\i}rez Alfons\'{\i}n and Ska{\l}ba \cite{RS} also made another reasonable conjecture \cite[Conjecture 3]{RS}.
\begin{conjecture}\label{conjecture2}
Let $2< a_1 <a_2$ be two relatively prime integers, then
$$
\pi_{a_1,a_2}\sim\frac{\pi(g_{a_1,a_2})}{2}\quad (\text{as}~a_1\rightarrow\infty).
$$
\end{conjecture}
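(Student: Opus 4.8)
The plan is to prove Conjecture~\ref{conjecture2} conditionally on the Generalized Riemann Hypothesis for Dirichlet $L$-functions, by recasting $\pi_{a_1,a_2}$ as a count of primes in a staircase region and then appealing to the prime number theorem in arithmetic progressions. Write $g=g_{a_1,a_2}=a_1a_2-a_1-a_2$, and for $\gcd(a,q)=1$ put $\pi(x;q,a)=\#\{p\le x:p\equiv a\ (\mathrm{mod}\ q)\}$. Every prime of the form $a_1x_1+a_2x_2$ with $x_1,x_2\in\mathbb{Z}_{\ge0}$ has a \emph{unique} such representation with $0\le x_1<a_2$; grouping these primes by $m:=x_2$, using that for $1\le m\le a_1-2$ the constraint $p\le g$ forces $0\le x_1\le N_m:=\lfloor(g-a_2m)/a_1\rfloor$ with $N_m<a_2$ automatic, and discarding the $O(\omega(a_1a_2))$ primes dividing $a_1a_2$, one finds
\[
\pi_{a_1,a_2}=\sum_{\substack{1\le m\le a_1-2\\\gcd(m,a_1)=1}}\#\bigl\{a_2m<p\le a_2m+a_1N_m:\ p\equiv a_2m\ (\mathrm{mod}\ a_1)\bigr\}+O\bigl(\omega(a_1a_2)\bigr).
\]
Here $a_2m+a_1N_m=g-O(a_1)$ \emph{uniformly} in $m$, and as $m$ runs over its range the residue $a_2m$ runs over all invertible classes modulo $a_1$ save one. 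Summing the upper endpoints therefore yields $\pi(g)-\pi(g;a_1,-a_2)+O(a_1)=\pi(g)+o(\pi(g))$ (Brun--Titchmarsh, since $a_1<\sqrt{2g}$ and $\phi(a_1)\to\infty$), and the lower endpoints yield the diagonal sum $D:=\sum_{1\le m\le a_1-2,\ \gcd(m,a_1)=1}\pi(a_2m;a_1,a_2m)$, so that
\[
\pi_{a_1,a_2}=\pi(g)-D+o(\pi(g)).
\]
Conjecture~\ref{conjecture2} is thus equivalent to $D=\tfrac12\pi(g)(1+o(1))$; the difficulty is that in $D$ the cutoff and the residue class move together.

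The main term is soft. Writing $\pi(a_2m;a_1,a_2m)=\mathrm{li}(a_2m)/\phi(a_1)+E_m$, a layer-cake evaluation of $\tfrac1{\phi(a_1)}\sum_m\mathrm{li}(a_2m)$ uses $\#\{t/a_2\le m\le a_1-2:\gcd(m,a_1)=1\}=\tfrac{\phi(a_1)}{a_1a_2}(a_1a_2-t)+O(2^{\omega(a_1)})$ and $\int_2^{a_1a_2}\frac{a_1a_2-t}{\log t}\,dt=(\tfrac12+o(1))\,a_1a_2\,\mathrm{li}(a_1a_2)$, together with $2^{\omega(a_1)}=a_1^{o(1)}$ and $a_1a_2=g(1+o(1))$, to give $\tfrac1{\phi(a_1)}\sum_m\mathrm{li}(a_2m)=\tfrac12\,\mathrm{li}(g)(1+o(1))=\tfrac12\pi(g)(1+o(1))$. (Morally: $a_2^{-1}p\bmod a_1$ equidistributes over residues as $p$ runs over primes, so $D$ computes $\sum_{p\le g}(1-p/g)$, which is $\tfrac12\pi(g)(1+o(1))$ because $\sum_{p\le g}p\sim g^2/(2\log g)$.) Hence the main terms already deliver $\pi_{a_1,a_2}=\tfrac12\pi(g)(1+o(1))$ \emph{provided} $\sum_m E_m=o(\pi(g))$.

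That estimate is the crux and the expected main obstacle. Term by term GRH supplies only $|E_m|\ll(a_2m)^{1/2}\log(a_1a_2)$, and summing over $m\asymp a_1$ gives $\ll a_1^{3/2}a_2^{1/2}\log(a_1a_2)$, which beats $\pi(g)\asymp a_1a_2/\log(a_1a_2)$ only when $a_2\gg a_1(\log a_1a_2)^4$ — useless precisely in the hardest regime $a_2\asymp a_1$, where $a_1$ is of size $\approx g^{1/2}$, at the very edge of what GRH controls for an individual modulus. The remedy is to extract the missing cancellation by \emph{freezing the cutoff in short blocks}. Partition $[1,a_1-2]$ into $\eta^{-1}$ blocks $I$ of length $\eta a_1$; on each block $a_2m$ lies within $O(\eta a_1a_2)$ of $X_I:=a_2\min I$, so replacing $a_2m$ by $X_I$ throughout costs only $O\bigl(a_1+\eta a_1a_2\log\log(a_1a_2)\bigr)$ in total (crude counting of integers in progressions, plus $\mathrm{li}(a_2m)-\mathrm{li}(X_I)$). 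After this replacement block $I$ contributes $\sum_{m\in I,\ \gcd(m,a_1)=1}\bigl(\pi(X_I;a_1,a_2m)-\mathrm{li}(X_I)/\phi(a_1)\bigr)$ — a sum over \emph{residue classes} for a \emph{fixed} modulus and a \emph{fixed} cutoff, to which Cauchy--Schwarz applies: it is $\le(\eta a_1)^{1/2}V(X_I)^{1/2}$, where $V(X):=\sum_{a\bmod a_1}^{\ast}|\pi(X;a_1,a)-\mathrm{li}(X)/\phi(a_1)|^2$.

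The Perron (explicit) formula, orthogonality of characters, and the GRH bound $\psi(X,\chi)\ll X^{1/2}(\log a_1X)^2$ for $\chi\ne\chi_0$ then yield the Barban--Davenport--Halberstam-type estimate $V(X)\ll X(\log a_1X)^B$ (for $X\ge a_1$), so that, summing over the $\eta^{-1}$ blocks,
\[
\Bigl|\sum_m E_m\Bigr|\ \ll\ \eta^{-1/2}(a_1g)^{1/2}(\log g)^{B/2}\ +\ a_1\ +\ \eta\,a_1a_2\log\log(a_1a_2).
\]
Taking $\eta=(\log g)^{-2}$ makes each term $o(\pi(g))$: the first because $(a_1g)^{1/2}=a_1a_2^{1/2}(1+o(1))$ and $a_2\to\infty$, the second because $a_1\le\sqrt{2g}$, the third because $(\log g)^{-1}\log\log(a_1a_2)\to0$. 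As all bounds are uniform in $a_2>a_1$, this gives $\pi_{a_1,a_2}\sim\tfrac12\pi(g_{a_1,a_2})$ as $a_1\to\infty$. Two elementary inputs stay in the background: the distribution of integers coprime to $a_1$ in short subintervals of $[1,a_1)$, kept harmless by $2^{\omega(a_1)}=a_1^{o(1)}$, and — should one instead run the argument through the semigroup symmetry $n\mapsto g-n$ — the existence of a residue coprime to a given modulus in a short window, controlled by Iwaniec's bound $g(n)\ll(\log n)^2$ on the Jacobsthal function.
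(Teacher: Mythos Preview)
There is no proof in the paper to compare against. Conjecture~\ref{conjecture2} is stated in the introduction purely as background: the paper records it and then immediately says ``Recently, Ding \cite{Ding} and Ding, Zhai and Zhao \cite{DingZhaiZhao} proved Conjecture~\ref{conjecture2},'' without reproducing or sketching any argument. So the object you were asked to compare your write-up to does not exist in this paper.

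That said, two substantive remarks on what you produced. First, your argument is explicitly \emph{conditional on GRH}, whereas the results cited by the paper (\cite{Ding}, \cite{DingZhaiZhao}) establish the asymptotic $\pi_{a_1,a_2}\sim\tfrac12\pi(g_{a_1,a_2})$ \emph{unconditionally}. Your write-up therefore recovers a strictly weaker theorem than what the literature already contains. Second, as a GRH-conditional sketch the strategy is sound: the reduction to $D=\sum_m\pi(a_2m;a_1,a_2m)$ is correct, the main-term computation via $\mathrm{li}$ and layer-cake is routine, and the block-plus-Cauchy--Schwarz device does convert the diagonal error into the fixed-modulus variance $V(X)=\phi(a_1)^{-1}\sum_{\chi\ne\chi_0}|\psi(X,\chi)|^2$, which GRH bounds by $X(\log a_1X)^{O(1)}$; the choice $\eta=(\log g)^{-2}$ then balances the errors as you claim. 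The one place to tighten in a full write-up is the ``crude counting'' step when replacing $a_2m$ by $X_I$: you should state precisely which bound (trivial, Brun--Titchmarsh, or just integer-in-progression counting) you are invoking and verify the $\log\log$ factor, and you should check that $X_I\ge a_2>a_1$ on every block so that the variance bound applies uniformly. None of this is a real obstacle, but as written it is a plan rather than a proof, and in any case it does not match or improve on what the paper cites.
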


Recently, Ding \cite{Ding} and Ding, Zhai and Zhao \cite{DingZhaiZhao} proved Conjecture \ref{conjecture2}.  In a more recent article, Dai, Ding and Wang \cite{DZZ} confirmed Conjecture \ref{conjecture1}. In \cite{Chen-Zhu1, Chen-Zhu2}, Chen and Zhu obtained  further results on primes of the form $ax+by$.

The motivation of this note is the following observation from Conjecture \ref{conjecture1}. The validity of it clearly means that there exists a prime $p<g_{a_1,a_2}$ of the form
\begin{align}\label{eq2}
p=a_1x_1+a_2x_2 \quad (x_1, x_2\in \mathbb{Z}_{\ge 0}).
\end{align}
Moreover, the integers $x_1$ and $x_2$ in (\ref{eq2}) must satisfy $\gcd(x_1,x_2)=1$. This naturally leads us to ask whether all sufficiently large integers $n$ can be written in the form
\begin{align}\label{eq3}
n=a_1x_1+a_2x_2, \quad (x_1, x_2\in\mathbb{Z}_{\ge 0},~\gcd(x_1, x_2)=1).
\end{align}
If the answer is affirmative, let $G_{a_1,a_2}$ be the least integer such that all integers $n>G_{a_1, a_2}$ can be expressed in the form (\ref{eq3}). We are going to show that $G_{a_1,a_2}$ is indeed well defined. Generally, we can extend $G_{a_1,a_2}$ to $k$ variables. Let $a_1, a_2, \cdots, a_k$ be positive integers with
$
\gcd(a_1, a_2, \cdots, a_k)=1.
$
Let $G_{a_1, a_2, \cdots, a_k}$ be the least integer such that all integers $n>G_{a_1, a_2, \cdots, a_k}$ can be expressed as the form
$$
n=a_1x_1+a_2x_2+\cdots+a_kx_k, \quad (x_1, x_2, \cdots, x_k\in\mathbb{Z}_{\ge 0},~\gcd(x_1, x_2, \cdots, x_k)=1).
$$
The finiteness fact of $G_{a_1, a_2, \dots, a_k}$ for general $k$ can also be proved.

\begin{thm}\label{new-theorem-2}
Let $k\ge 2$ and $a_1, a_2, \cdots, a_k$ be positive integers with
$$
\gcd(a_1, a_2, \cdots, a_k)=1.
$$
Then $G_{a_1,\dots,a_k}$ is finite.
\end{thm}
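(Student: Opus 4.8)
The plan is to prove finiteness by counting, for each large $n$, the admissible representations and showing this count is positive; the argument is uniform in $k$, with $k=2$ the only delicate case. The first ingredient is the classical asymptotic for the unrestricted representation function: writing
\[
N(m)=\#\bigl\{(x_1,\dots,x_k)\in\mathbb{Z}_{\ge 0}^{k}:\ a_1x_1+\cdots+a_kx_k=m\bigr\},
\]
so that $\sum_{m\ge 0}N(m)z^m=\prod_{i=1}^{k}(1-z^{a_i})^{-1}$, a partial-fraction analysis of this rational function (equivalently, counting lattice points in dilates of the simplex $\{x\in\mathbb{R}_{\ge 0}^{k}:\ a_1x_1+\cdots+a_kx_k=1\}$) gives
\[
N(m)=\frac{m^{k-1}}{(k-1)!\,a_1\cdots a_k}+O\!\bigl(m^{k-2}\bigr)\qquad(m\ge 1),
\]
with implied constant depending only on $a_1,\dots,a_k$. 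The hypothesis $\gcd(a_1,\dots,a_k)=1$ enters precisely here: it makes $z=1$ the unique pole of maximal order $k$ of the generating series, every other root of unity contributing a pole of order at most $k-1$, so that those contributions are absorbed into the error term.

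Next I would M\"obius-invert over the common divisor of the variables. Let $R(n)$ denote the number of representations of $n$ with $\gcd(x_1,\dots,x_k)=1$. A representation in which $d\mid x_i$ for every $i$ is the same as a representation of $n/d$, which in particular forces $d\mid n$; hence
\[
R(n)=\sum_{d\mid n}\mu(d)\,N\!\left(\frac nd\right).
\]
Substituting the asymptotic above, and using $\sum_{d\mid n}\mu(d)d^{1-k}=\prod_{p\mid n}(1-p^{1-k})$ together with $\sum_{d\mid n}(n/d)^{k-2}\le n^{k-2}\tau(n)$, where $\tau$ is the divisor function, this becomes
\[
R(n)=\frac{n^{k-1}}{(k-1)!\,a_1\cdots a_k}\prod_{p\mid n}\Bigl(1-\frac1{p^{k-1}}\Bigr)+O\!\bigl(n^{k-2}\tau(n)\bigr).
\]

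It remains to bound the main term from below and compare it with the error. For $k\ge 3$ one has $\prod_{p\mid n}(1-p^{1-k})\ge 1/\zeta(k-1)>0$, so the main term is $\gg n^{k-1}$ while the error is $O(n^{k-2}\tau(n))=O(n^{k-2+o(1)})$; hence $R(n)>0$ for every $n$ exceeding some threshold $n_0=n_0(a_1,\dots,a_k)$. For $k=2$ the main term equals $\varphi(n)/(a_1a_2)$, which by Mertens' theorem is only $\gg n/(a_1a_2\log\log n)$ and cannot be bounded below by a fixed multiple of $n$ (take $n$ a product of many small primes), whereas the error is merely $O(\tau(n))$; since $\log\log n$ and $\tau(n)$ are both $n^{o(1)}$, the main term still wins once $n$ is large in terms of $a_1,a_2$. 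In all cases $R(n)>0$ for $n>n_0$, so $G_{a_1,\dots,a_k}\le n_0<\infty$. The only genuine obstacle is thus the $k=2$ regime, where a crude union bound over the primes $p\mid n$ fails and one must keep the exact M\"obius cancellation together with the standard estimates $\varphi(n)\gg n/\log\log n$ and $\tau(n)=O_\varepsilon(n^{\varepsilon})$. (An alternative, more hands-on proof of finiteness alone would fix a vector $(v_1,\dots,v_k)\in\mathbb{Z}^{k}$ with $a_1v_1+\cdots+a_kv_k=0$ and $\gcd(v_1,\dots,v_k)=1$ --- one exists because $x\mapsto a_1x_1+\cdots+a_kx_k$ maps $\mathbb{Z}^{k}$ onto $\mathbb{Z}$ --- start from a representation $n=a_1x_1+\cdots+a_kx_k$ with all $x_i$ large, and sieve among the translates $(x_i+tv_i)_i$, $t\in\mathbb{Z}$, for one with globally coprime coordinates.)
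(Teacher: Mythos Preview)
Your argument is correct and is essentially the same as the paper's: both M\"obius-invert the relation between the coprime and unrestricted representation counts, insert the leading-term asymptotic for the unrestricted count, and then split into $k\ge 3$ (where $\prod_{p\mid n}(1-p^{1-k})$ is bounded away from $0$) versus $k=2$ (where the main term $\varphi(n)/(a_1a_2)$ still dominates the $n^{o(1)}$ error). The only cosmetic differences are that the paper cites Bell for the polynomial form of the denumerant rather than your partial-fraction derivation, and bounds the error by $2^{\omega(n)}$ instead of your $\tau(n)$.
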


We are now in a position to highlight the title of this article.

\begin{problem}[The Diophantine Frobenius Problem revisited]\label{pro2}
Let $a_1, a_2, \cdots, a_k$ be positive integers with $\gcd(a_1, a_2, \cdots, a_k)=1$. Determine the closed form of $G_{a_1, a_2, \cdots, a_k}$.
\end{problem}

From now on, we will focus on the investigations of two variables situations.

Let $\omega(n)$ be the number of different prime factors of $n$ and $\varphi(n)$ the Euler totient function. Let $\{t\}=t-\lfloor t\rfloor$ be the fractional part of $t$.
Let $1< a_1<a_2$ be two relatively prime integers. For a positive integer $n$ let
$$
f(n)=\#\big\{(x_1, x_2)\in \mathbb{Z}_{\ge 0}^2: a_1x_1+a_2x_2=n, \gcd(x_1, x_2)=1\big\}.
$$
By this notation, we have $f(n)>0$ for any $n>G_{a_1,a_2}$.  Using similar arguments of Theorem \ref{new-theorem-2} we can give the following closed form of $f(n)$.

\begin{thm}\label{new-thm1}
	Let $1< a_1<a_2$ be two integers with $\gcd(a_1, a_2)=1$. Suppose that $0\leq r_n<a_1$ denotes the unique integer such that $a_2r_n\equiv n\pmod{a_1}$. Then we have
	\[
    f(n)=\frac{\varphi(n)}{a_1a_2}+E(n),
	\]
	where the error term satisfies $|E(n)|< 2^{\omega(n)}$ having the explicit expression
    $$
    E(n)=\sum_{d\mid n}\mu\left(\frac{n}{d}\right)\bigg(1-\frac{r_{d}}{a_1}-\Big\{\frac{d-a_2r_d}{a_1a_2}\Big\}\bigg).
    $$
\end{thm}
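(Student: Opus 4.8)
The plan is to strip off the coprimality condition by a Möbius inversion and then to substitute an elementary closed form (a Popoviciu-type count) for the number of \emph{all} non-negative representations of an integer by $a_1,a_2$. Write
\[
N(m)=\#\big\{(y_1,y_2)\in\mathbb{Z}_{\ge 0}^2:\ a_1y_1+a_2y_2=m\big\}.
\]
Partitioning the pairs counted by $f(n)$ according to $d=\gcd(x_1,x_2)$ and using that $\sum_{d\mid\gcd(x_1,x_2)}\mu(d)$ equals $1$ when $\gcd(x_1,x_2)=1$ and $0$ otherwise gives
\[
f(n)=\sum_{d\ge 1}\mu(d)\,\#\big\{(x_1,x_2)\in\mathbb{Z}_{\ge 0}^2:\ a_1x_1+a_2x_2=n,\ d\mid x_1,\ d\mid x_2\big\};
\]
after the substitution $x_i=dy_i$ the inner count vanishes unless $d\mid n$, in which case it equals $N(n/d)$, so
\[
f(n)=\sum_{d\mid n}\mu(d)\,N(n/d).
\]

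Next I would derive the closed form of $N(m)$ by exactly the bookkeeping behind Theorem \ref{new-theorem-2}. In a representation $m=a_1x_1+a_2x_2$ one has $a_2x_2\equiv m\pmod{a_1}$, so $x_2$ is forced into the residue class $r_m\bmod a_1$, i.e.\ $x_2\in\{r_m,\ r_m+a_1,\ r_m+2a_1,\dots\}$, while $x_1=(m-a_2x_2)/a_1\ge 0$ is equivalent to $x_2\le m/a_2$. Counting the admissible values of $x_2$ gives $N(m)=\lfloor(m-a_2r_m)/(a_1a_2)\rfloor+1$ for every $m\ge 0$; the degenerate case $m<a_2r_m$ (no representation) is automatically covered, because $0\le r_m<a_1$ forces $-a_1a_2<m-a_2r_m<0$ there, so the floor is $-1$ and the formula returns $0$. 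Writing $\lfloor t\rfloor=t-\{t\}$ and $\tfrac{a_2r_m}{a_1a_2}=\tfrac{r_m}{a_1}$ yields
\[
N(m)=\frac{m}{a_1a_2}-\frac{r_m}{a_1}-\Big\{\frac{m-a_2r_m}{a_1a_2}\Big\}+1.
\]

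Substituting this into $f(n)=\sum_{d\mid n}\mu(d)N(n/d)$, the term $\tfrac1{a_1a_2}\sum_{d\mid n}\mu(d)(n/d)=\varphi(n)/(a_1a_2)$ is the main term, and collecting the rest and re-indexing $d\mapsto n/d$ gives precisely
\[
E(n)=\sum_{d\mid n}\mu\!\Big(\frac{n}{d}\Big)\Big(1-\frac{r_d}{a_1}-\Big\{\frac{d-a_2r_d}{a_1a_2}\Big\}\Big).
\]
For the bound, put $c_d:=1-\frac{r_d}{a_1}-\{(d-a_2r_d)/(a_1a_2)\}$. Since $0\le r_d/a_1<1$ and $0\le\{\cdot\}<1$, we have $-1<c_d\le 1$, and $c_d=1$ happens exactly when $a_1a_2\mid d$ (both subtracted summands must vanish). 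The triangle inequality then gives $|E(n)|\le\sum_{d\mid n:\ n/d\text{ squarefree}}|c_d|\le 2^{\omega(n)}$. To make this strict, note that equality would force $|c_d|=1$, hence $c_d=1$, for every $d$ with $n/d$ squarefree; then $E(n)=\sum_{e\mid n,\ e\text{ squarefree}}\mu(e)$, which equals $0$ when $n>1$, contradicting $|E(n)|=2^{\omega(n)}\ge 2$, and equals $c_1$ when $n=1$, where $|c_1|<1$ because $c_1=1$ would require $a_1a_2\mid 1$, impossible. Hence $|E(n)|<2^{\omega(n)}$ in all cases.

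The Möbius inversion and the final summation are routine; the only steps requiring genuine care are verifying the floor formula for $N(m)$ at the small and degenerate values of $m$, and the short case analysis at the end that upgrades $\le$ to $<$. I do not expect a serious obstacle: the whole argument is a Möbius inversion stacked on top of the one-line count of lattice points on a segment, in the same spirit as the proof of Theorem \ref{new-theorem-2}.
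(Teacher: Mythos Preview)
Your proof is correct and follows essentially the same route as the paper: M\"obius inversion $f(n)=\sum_{d\mid n}\mu(d)\,g(n/d)$ combined with the explicit count $g(m)=\lfloor(m-a_2r_m)/(a_1a_2)\rfloor+1$, which is exactly the paper's Lemma~\ref{lemma4}. In fact you supply more than the paper does: the paper only records $|R(n)|\le 1$ and leaves the passage to the strict bound $|E(n)|<2^{\omega(n)}$ implicit, whereas your short case analysis upgrading $\le$ to $<$ is a correct addition.
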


We now take a close look at the error term $E(n)$ in Theorem \ref{new-thm1}.
It is well-known that (see e.g., \cite[Page 238, 5(b)]{Montgomery}) there is a positive constant $c$ such that
$$
\sum_{n\leq N}2^{\omega(n)}=cN\log N+O(N),
$$
from which it follows that
 \[\sum_{n\leq N}E(n)\ll \sum_{n\leq N}2^{\omega(n)}\ll N\log N.\]
It seems interesting to improve the above trivial bound involving the mean value of $E(n)$. We are able to give a conditional improvement of it. The results on zero-free region of $\zeta(s)$ at present does not seem possible to provide an unconditional improvement by the same argument of the following theorem.

\begin{thm}\label{thm:meanvalue}
    Assuming the generalized Riemann hypothesis, for any $\varepsilon>0$ we have 
    \[\sum_{n\leq N}E(n)\ll a_1a_2N^{\frac{1}{2}+\varepsilon},\]
    where the implied constant depends only on $\varepsilon$.  
\end{thm}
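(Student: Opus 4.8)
The plan is to write $E$ as a Dirichlet convolution, pass to Dirichlet series, and extract $\sum_{n\le N}E(n)$ by a truncated Perron formula whose contour I push to $\Re s=\tfrac12+\varepsilon$ using GRH. First I would set $q=a_1a_2$ and, following Theorem~\ref{new-thm1}, let $g(d)=1-\tfrac{r_d}{a_1}-\{\tfrac{d-a_2r_d}{a_1a_2}\}$, so that $E(n)=\sum_{d\mid n}\mu(n/d)g(d)$, i.e. $E=\mu*g$. Two elementary facts about $g$: it is periodic of period dividing $q$ (since $r_{d+q}=r_d$ because $q\equiv 0\pmod{a_1}$, and the fractional part merely shifts by the integer $1$), and $|g(d)|\le 1$ for every $d$ (since $r_d/a_1$ and the fractional part each lie in $[0,1)$). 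Hence $\sum_{n\le N}E(n)=\sum_{dm\le N}\mu(m)g(d)=\sum_{d\le N}g(d)M(N/d)$ with $M(x)=\sum_{m\le x}\mu(m)$. I would dispose of the case $q>N$ right away: RH gives $M(x)\ll_\varepsilon x^{1/2+\varepsilon}$, so $\sum_{n\le N}E(n)\ll_\varepsilon\sum_{d\le N}(N/d)^{1/2+\varepsilon}\ll_\varepsilon N<qN^{1/2+\varepsilon}$. For the rest I may therefore assume $q\le N$, which is exactly what will later let me swallow stray factors $q^{o(1)}$.

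Next I would study $F(s)=\sum_{n\ge 1}E(n)n^{-s}=\zeta(s)^{-1}G(s)$ for $\Re s>1$, where $G(s)=\sum_{d\ge 1}g(d)d^{-s}$. Sorting $d$ by its residue class modulo $q$ turns $G$ into $G(s)=q^{-s}\sum_{a=1}^{q}g(a)\zeta(s,a/q)$, a Hurwitz zeta combination; this continues $G$ meromorphically with at most a simple pole at $s=1$, whose residue $q^{-1}\sum_{a=1}^q g(a)$ need not vanish but is cancelled inside $F$ by the simple zero of $\zeta(s)^{-1}$ there. Under GRH, $\zeta(s)\ne 0$ for $\Re s>\tfrac12$, so $F$ is holomorphic in $\Re s>\tfrac12$. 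To size $F$ on $\Re s=\sigma\ge\tfrac12+\varepsilon$, I would use the classical expansion $\zeta(s,a/q)=(q')^{s}\varphi(q')^{-1}\sum_{\chi\bmod q'}\overline{\chi}(a/e)\,L(s,\chi)$ with $e=\gcd(a,q)$, $q'=q/e$, together with the GRH bounds $|L(\sigma+it,\chi)|\ll_{\varepsilon,\delta}(q(2+|t|))^{\delta}$ (for $\chi$ of modulus dividing $q$) and $1/|\zeta(\sigma+it)|\ll_{\varepsilon,\delta}(2+|t|)^{\delta}$. Since $|g(a)|\le 1$ and the $a$-sum has length $q$, the powers of $q$ combine to $q^{-\sigma}\cdot q\cdot q^{\sigma}=q$, yielding $|F(\sigma+it)|\ll_{\varepsilon,\delta}q^{1+\delta}(2+|t|)^{\delta}$ uniformly for $\sigma\ge\tfrac12+\varepsilon$.

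Finally I would invoke the truncated Perron formula with $\kappa=1+1/\log N$ and truncation height $T=N$; using $|E(n)|<2^{\omega(n)}\ll_\delta n^{\delta}$ from Theorem~\ref{new-thm1}, the Perron error is $\ll_\delta N^{\delta}$. Since $F$ is holomorphic in the rectangle $\tfrac12+\varepsilon\le\Re s\le\kappa$, $|\Im s|\le T$ (and $N^s/s$ has its only pole at $s=0$, outside it), Cauchy's theorem lets me move the contour to $\Re s=\tfrac12+\varepsilon$. The two horizontal segments contribute $\ll_{\varepsilon,\delta}(N^{\kappa}/T)q^{1+\delta}T^{\delta}\ll q^{1+\delta}N^{\delta}$ (because $N^{\kappa}/T\ll 1$), and the vertical integral contributes $\ll_{\varepsilon,\delta}N^{1/2+\varepsilon}q^{1+\delta}\int_{-T}^{T}(2+|t|)^{\delta}(1+|t|)^{-1}\,dt\ll_{\varepsilon,\delta}N^{1/2+\varepsilon}q^{1+\delta}T^{\delta}$. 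Taking $T=N$ and using $q\le N$ so that $q^{\delta}\le N^{\delta}$, all contributions are $\ll_{\varepsilon,\delta}qN^{1/2+\varepsilon+2\delta}$; choosing $\delta$ small and renaming $\varepsilon$ gives $\sum_{n\le N}E(n)\ll_\varepsilon a_1a_2 N^{1/2+\varepsilon}$.

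The step I expect to be the main obstacle is the uniform, single-power-of-$q$ control of $F$ on $\Re s=\tfrac12+\varepsilon$. The tempting route is to expand the $q$-periodic function $g$ directly into $q$ additive characters $e(hd/q)$ and treat each sum $\sum_{dm\le N}\mu(m)e(hd/q)$ separately, but that leaks a full extra power of $q$ (one factor from the number of characters, one more from the size of the corresponding Dirichlet series divided by $\zeta$), producing only $q^{2}N^{1/2+\varepsilon}$. Keeping $g$ packaged inside the Hurwitz zeta, so that the $q^{-s}$ in $G$ nearly cancels the $q^{s}$ produced when converting $\zeta(s,a/q)$ into Dirichlet $L$-functions, is precisely what preserves the single factor $a_1a_2$. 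The other unavoidable feature is the dependence on GRH: this argument needs Lindelöf-quality bounds for Hurwitz zeta functions at rational arguments on the line $\Re s=\tfrac12+\varepsilon$, and no such bounds follow from known zero-free regions alone — which is exactly the limitation noted just before the theorem.
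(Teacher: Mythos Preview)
Your proposal is correct and follows essentially the same route as the paper: both recognize $E=\mu*h$ for a bounded $q$-periodic $h$ with $q=a_1a_2$, expand $h$ into Dirichlet characters (you via the Hurwitz zeta identity, the paper directly by sorting residues according to $\gcd(d,q)$), and then apply a truncated Perron formula with the contour shifted to $\Re s=\tfrac12+\varepsilon$ using the GRH Lindel\"of bounds for $L(s,\chi)$ and $1/\zeta(s)$. Your separate disposal of the range $q>N$ via the Mertens bound is a tidy addition, but otherwise the two arguments coincide.
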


As an application of Theorem \ref{new-thm1}, for any $\varepsilon>0$ we have
$$
G_{a_1, a_2}\ll_\varepsilon a_1a_2\exp\left(\frac{(\log 2+\varepsilon)\log (a_1a_2)}{\log\log (a_1a_2)}\right)
$$
from explicit bounds of $\omega(n)$ (see \cite[Theorem 12]{Robin}) and $\varphi(n)$ (see \cite[Theorem 15]{Rosser}) with rountine computations.
We will obtain more explicit estimations of $G_{a_1, a_2}$.

\begin{thm}\label{thm:upperbound}
Let $1< a_1<a_2$ be two integers with $\gcd(a_1, a_2)=1$. Then we have
$$
a_1a_2\le G_{a_1, a_2}\ll a_1a_2(\log a_1a_2)^2,
$$
where the implied constant is absolute.
\end{thm}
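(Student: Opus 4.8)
The plan is to establish the two inequalities separately. The lower bound $a_1a_2\le G_{a_1,a_2}$ is elementary: I would check that $n=a_1a_2$ has no representation of the form (\ref{eq3}). Indeed, if $a_1a_2=a_1x_1+a_2x_2$ with $x_1,x_2\in\mathbb{Z}_{\ge 0}$, then $\gcd(a_1,a_2)=1$ forces $a_2\mid x_1$ and $a_1\mid x_2$; writing $x_1=a_2s$ and $x_2=a_1t$ gives $s+t=1$, so $(x_1,x_2)$ is $(a_2,0)$ or $(0,a_1)$, and in either case the two coordinates share a factor $>1$. Hence $f(a_1a_2)=0$, whence $G_{a_1,a_2}\ge a_1a_2$.

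For the upper bound, fix $n$ and list the solutions of $a_1x_1+a_2x_2=n$ in $\mathbb{Z}_{\ge 0}^2$ as $x_1=c+a_2j$, $x_2=x_2^{(0)}-a_1j$ for $j=0,1,\dots,N(n)-1$, where $0\le c<a_2$ is the residue with $a_1c\equiv n\pmod{a_2}$, $x_2^{(0)}=(n-a_1c)/a_2$, and $N(n)$ is the number of solutions; counting the admissible $x_1$ (they form an arithmetic progression of common difference $a_2$ inside $[0,n/a_1]$) gives $N(n)\ge n/(a_1a_2)-1$. The heart of the argument is the claim that $(x_1,x_2)$ fails to be coprime precisely when $j$ lies in one of at most $\omega(n)$ excluded residue classes, one modulo each prime $p\mid n$. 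One direction is immediate, since $p\mid\gcd(x_1,x_2)$ implies $p\mid a_1x_1+a_2x_2=n$. For the other I would split into cases: if $p\mid n$ and $p\nmid a_2$, then $p\mid x_1$ already forces $p\mid a_2x_2$, hence $p\mid x_2$, so the obstruction is the single condition $x_1\equiv0\pmod p$, i.e.\ one congruence for $j$; if $p\mid n$ and $p\mid a_2$ (so $p\nmid a_1$), then $p\mid n$ forces $p\mid x_1$ automatically and the obstruction reduces to $p\mid x_2$, again one congruence for $j$. Writing $b_p$ for the excluded class modulo $p$ and choosing, by the Chinese Remainder Theorem, an integer $B$ with $B\equiv b_p\pmod p$ for every $p\mid n$, the pair indexed by $j$ is coprime if and only if $\gcd(j-B,n)=1$.

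It then follows that $f(n)>0$ as soon as the block of $N(n)$ consecutive integers $\{-B,\dots,-B+N(n)-1\}$ contains an integer coprime to $n$, which holds whenever $N(n)\ge h(n)$, where $h(n)$ is the Jacobsthal function (the least $m$ such that every $m$ consecutive integers contain one coprime to $n$). Combined with $N(n)\ge n/(a_1a_2)-1$, this gives $f(n)>0$ whenever $n\ge a_1a_2\,(h(n)+1)$. To make this explicit I would invoke the classical estimate of Iwaniec, $h(n)\ll\omega(n)^2\bigl(\log(\omega(n)+2)\bigr)^2$, together with the elementary bound $\omega(n)\ll\log n/\log\log n$; since $\log(\omega(n)+2)\ll\log\log n$, the two powers of $\log\log n$ cancel and $h(n)\ll(\log n)^2$ with an absolute constant. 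Hence there is an absolute $C$ with $f(n)>0$ whenever $n\ge Ca_1a_2(\log(n+3))^2$. Finally, using $a_1a_2\ge 6$, for $n$ of size $\asymp a_1a_2(\log a_1a_2)^2$ one has $\log(n+3)\ll\log(a_1a_2)$, so a routine bootstrap upgrades this to $f(n)>0$ for all $n\ge C'a_1a_2(\log a_1a_2)^2$ with $C'$ absolute (the finitely many pairs with $a_1a_2$ below any fixed bound contribute only a constant to $G_{a_1,a_2}$, e.g.\ via the crude estimate recorded after Theorem \ref{new-thm1}). This yields $G_{a_1,a_2}\ll a_1a_2(\log a_1a_2)^2$.

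The step I expect to be the main obstacle is the case analysis in the central claim, especially the behaviour at primes dividing $a_1$ or $a_2$: the crucial observation is that $p\mid n$ automatically forces $p$ into one of the two coordinates, so that only a single genuine congruence on $j$ remains and the reduction to the Jacobsthal function goes through. The remaining ingredients — the count $N(n)$, the passage to a coprimality problem on an interval, and the combination of the Jacobsthal bound with the estimate for $\omega(n)$ followed by the bootstrap — are routine.
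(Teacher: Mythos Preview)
Your proof is correct and follows essentially the same route as the paper: both parametrize the nonnegative solutions as an arithmetic progression in a single index, observe that for each prime $p\mid n$ the failure of coprimality is a single residue class modulo $p$ (with the primes dividing $a_1$ or $a_2$ handled by the automatic divisibility you noted), reduce via CRT to the ordinary Jacobsthal function, and then invoke Iwaniec's bound $j(n)\ll(\log n)^2$ and bootstrap. The only cosmetic difference is that the paper packages the CRT step as a separate lemma bounding a ``generalized Jacobsthal function'' $j_{\mathcal C}(n)\le j(n)$, whereas you perform the shift $j\mapsto j-B$ directly.
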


\begin{thm}\label{thm:lowerbound}
    Let $a_1>2$ be a given integer. Then there is a positive constant $c_1$ depending only on $a_1$ such that
    \[
\limsup_{\substack{a_2\to\infty\\ \gcd(a_1,a_2)=1}}\frac{G_{a_1,a_2}}{a_2\log a_2 }>c_1.
\]
\end{thm}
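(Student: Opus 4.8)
The plan is to prove a bit more than is asked: that for every integer $a_2$ coprime to $a_1$ and large enough in terms of $a_1$, one has $G_{a_1,a_2}\ge \bigl(\tfrac{1}{\log 4}+o(1)\bigr)a_2\log a_2$, by constructing an explicit $n\asymp a_2\log a_2$ with $f(n)=0$. I would write the candidate as $n=a_1X_0+a_2Y_0$ with $X_0\in[0,a_2)$ and $Y_0\ge 0$; then, using $\gcd(a_1,a_2)=1$, the nonnegative representations $a_1x_1+a_2x_2=n$ are exactly $(X_0+ja_2,\,Y_0-ja_1)$ for $0\le j\le\lfloor Y_0/a_1\rfloor$, so the set of second coordinates is $M=\{Y_0-ja_1:0\le j\le\lfloor Y_0/a_1\rfloor\}$, and I would aim for $Y_0\asymp\log a_2$, so that $n\asymp a_2Y_0\asymp a_2\log a_2$ and there are $\asymp\log a_2$ representations.

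The heart of the argument is a device that makes every one of these representations non-primitive at once. First I would fix the residue $Y_0\equiv-1\pmod{a_1}$; since $a_1>2$ this guarantees that every element of $M$ is $\equiv-1\pmod{a_1}$, hence coprime to $a_1$, and in particular $\ge a_1-1\ge 2$. (This is exactly where the hypothesis $a_1>2$ is used.) Let $\mathrm{spf}(m)$ denote the least prime factor of $m$, and set $K=\prod p$, the product over the distinct primes $p$ occurring as $\mathrm{spf}(m)$ for some $m\in M$. Then $K$ is squarefree, $\gcd(K,a_1)=1$, and — importantly for the size bound — $K$ divides $\prod_{p\le Y_0}p$ since the primes involved are distinct and each is $\le m\le Y_0$. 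Using $\gcd(a_1,K)=1$ and the Chinese Remainder Theorem I would then choose $X_0\in[0,a_2)$ with $X_0\equiv-a_2Y_0a_1^{-1}\pmod{K}$, which is possible as long as $K<a_2$ (each residue class mod $K$ then meets $[0,a_2)$) and which forces $K\mid n$. Now for an arbitrary representation $(x_1,x_2)$, put $m=x_2\in M$ and $p=\mathrm{spf}(m)$: then $p\mid x_2$, $p\mid K\mid n$, and $p\nmid a_1$, so $a_1x_1=n-a_2x_2$ is divisible by $p$ and hence $p\mid x_1$. Thus $p\mid\gcd(x_1,x_2)$, every representation is non-primitive, $f(n)=0$, and $G_{a_1,a_2}\ge n\ge a_2Y_0$.

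It then remains to take $Y_0$ as large as the constraint $K<a_2$ permits. By the elementary Chebyshev bound $\prod_{p\le x}p<4^{x}$ one has $K\le\prod_{p\le Y_0}p<4^{Y_0}$, so $K<a_2$ whenever $Y_0<\log a_2/\log 4$. Choosing $Y_0$ to be the largest integer with $Y_0\equiv-1\pmod{a_1}$ and $Y_0<\log a_2/\log4$ gives $Y_0=\log a_2/\log4-O_{a_1}(1)$ (and $Y_0\ge 2$ once $a_2$ is large in terms of $a_1$), so
\[
\frac{G_{a_1,a_2}}{a_2\log a_2}\ \ge\ \frac{Y_0}{\log a_2}\ =\ \frac{1}{\log 4}-O_{a_1}\!\Bigl(\frac{1}{\log a_2}\Bigr),
\]
and letting $a_2\to\infty$ through integers coprime to $a_1$ yields $\limsup\ge 1/\log 4$; any $c_1<1/\log 4$ (e.g. $c_1=1/4$) then works.

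I expect the only genuine difficulty to be locating this construction; once the data $(X_0,Y_0,K)$ are fixed, all the verifications above are one line. A secondary, optional point is sharpening the constant: since a prime in $(\sqrt{Y_0},Y_0\,]$ contributes to $K$ only if it itself lies in $M$, i.e. in the progression $-1\bmod a_1$, a brief appeal to the prime number theorem for arithmetic progressions gives $K\le\exp\bigl((1/\varphi(a_1)+o(1))Y_0\bigr)$, which lets one take $Y_0\sim\varphi(a_1)\log a_2$ and improves the constant to $\varphi(a_1)$; this refinement is not needed for the theorem as stated.
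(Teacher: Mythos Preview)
Your argument is correct, and it is in fact stronger than what the paper proves: you obtain $G_{a_1,a_2}\ge(1/\log 4+o(1))a_2\log a_2$ for \emph{every} large $a_2$ coprime to $a_1$ (a $\liminf$ statement), whereas the paper only shows the $\limsup$ inequality by constructing special values of $a_2$. The two proofs share the same skeleton: fix the second coordinates to lie in the progression $\{a_1-1,2a_1-1,\dots\}$ (so they are all $\ge 2$ and coprime to $a_1$), then use the Chinese Remainder Theorem to make each representation share a prime with its $y$-coordinate. The key difference is \emph{which} variable the CRT is applied to. The paper fixes the target $n=qa_1a_2+1$ and runs an inductive CRT on $a_2$, choosing one congruence per $y_\ell$ (with a separate wrinkle at the index $\ell_0$ where $q\mid y_{\ell_0}$); the resulting $a_2$ is bounded by a primorial, giving $q\gg_{a_1}\log a_2$. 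You instead fix $a_2$ arbitrarily, take $K$ to be the squarefree product of the least prime factors of the $y$-values, and solve a single congruence $a_1X_0\equiv-a_2Y_0\pmod K$ to force $K\mid n$; Chebyshev's bound $K<4^{Y_0}$ then lets you take $Y_0\sim\log a_2/\log 4$. Your route is shorter, avoids the $\ell_0$ case analysis, and yields a uniform lower bound; the paper's route gives a slightly larger constant ($1$ versus $1/\log 4$) along its subsequence, though your sketched refinement via primes in the progression $-1\bmod a_1$ pushes your constant to $\varphi(a_1)$, surpassing both.
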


For fixed $a_1$, there is a small distance between the maximal orders of $G_{a_1,a_2}$ obtained by Theorems \ref{thm:upperbound} and \ref{thm:lowerbound}. Determining the exact maximal order of $G_{a_1,a_2}$ is an unsolved problem.

It is easy to see that the values of  $g_{a_1,a_2}$ are always odd. Mathematical experiments indicate that most values of $G_{a_1, a_2}$ are even. At present, we have no idea what kind of mathematical logic lies behind this. We are able to calculate a few values of $G_{a_1, a_2}$ up to $1<a_1<a_2\le 200$ with $\gcd(a_1, a_2)=1$ and see that all of them are even, except for
$$
G_{4, 13}=231,~ G_{12, 13}=693,~ G_{10, 37}=1653,~ G_{23,29}=3927,
$$
\[G_{28,95}=23205,~G_{7,83}=3705,~G_{7,90}=3705,~G_{10,199}=11571,\]
\[G_{24,199}=42315,~G_{29,180}=49665,~G_{29,189}=58695,~G_{49,160}=64155,\]
\[G_{49,171}=73185, ~G_{89, 133}=123585,~ G_{72, 199}=126945.\]
Here comes another interesting point, involving the parity of the value of $G_{a_1, a_2}$.

\begin{problem}\label{pro1}
Does $G_{a_1, a_2}$ take infinitely many odd values often?
\end{problem}

Unfortunately, we cannot answer this at present. However, we are able to prove that $G_{a_1, a_2}$ take even values infinitely many times. Actually, this fact follows from the following more explicit result.

\begin{thm}\label{thm2}
Let $a$ be an odd integer greater than $2$. Then
$
G_{2, a}=4a-2.
$
\end{thm}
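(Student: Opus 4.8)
The plan is to determine $G_{2,a}$ directly by analysing the representations $n=2x_1+ax_2$ with $x_1,x_2\in\mathbb{Z}_{\ge 0}$, using crucially that $a$ is odd. The basic observation is a parity constraint: reducing $n=2x_1+ax_2$ modulo $2$ and using that $a$ is odd gives $x_2\equiv n\pmod 2$, so $x_2$ has the same parity as $n$; moreover $x_2$ is the variable one must tune to arrange $\gcd(x_1,x_2)=1$. In any such representation one also has $0\le x_2\le n/a$, which confines the search to a short list of candidates.

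First I would show $f(4a-2)=0$. Suppose $4a-2=2x_1+ax_2$ with $x_1,x_2\ge 0$. Since $4a-2$ is even, $x_2$ is even, and since $ax_2\le 4a-2<4a$ we must have $x_2\in\{0,2\}$. If $x_2=0$ then $x_1=2a-1$, and $\gcd(x_1,x_2)=\gcd(2a-1,0)=2a-1\ge 5>1$ because $a\ge 3$. If $x_2=2$ then $x_1=(4a-2-2a)/2=a-1$, which is even since $a$ is odd, so $\gcd(x_1,x_2)=\gcd(a-1,2)=2\ne 1$. Hence $4a-2$ has no representation of the form (\ref{eq3}), which forces $G_{2,a}\ge 4a-2$.

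Next I would show $f(n)>0$ for every $n>4a-2$ by exhibiting an explicit admissible pair $(x_1,x_2)$ according to $n$ modulo $4$. (i) If $n$ is odd, then $n>4a-2\ge a$, so $x_1:=(n-a)/2$ is a positive integer and, taking $x_2:=1$, we get $n=2x_1+ax_2$ with $\gcd(x_1,1)=1$. (ii) If $n\equiv 0\pmod 4$, then since $4a-2\equiv 2\pmod 4$ the least multiple of $4$ exceeding $4a-2$ is $4a$, so $n\ge 4a$ and $x_1:=n/2-a$ is a nonnegative integer; as $n/2$ is even and $a$ is odd, $x_1$ is odd, so with $x_2:=2$ we have $\gcd(x_1,2)=1$. (iii) If $n\equiv 2\pmod 4$, then since $4a-2\equiv 2\pmod 4$ we get $n\ge 4a+2$, so $x_1:=n/2-2a\ge 1$ is an integer; as $n/2$ is odd, $x_1$ is odd, hence coprime to $4$, so with $x_2:=4$ we have $\gcd(x_1,4)=1$. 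In every case $f(n)>0$, so all $n>4a-2$ are representable in the form (\ref{eq3}); together with the previous paragraph this gives $G_{2,a}=4a-2$.

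I do not anticipate a real obstacle, since the proof reduces to a finite case check; the only delicate point---and the reason the answer is exactly $4a-2$---is the interplay of the coprimality condition with parity in case (iii). For $n\equiv 2\pmod 4$ the representations with $x_2\in\{0,2\}$ are always killed by the $\gcd$ constraint (the complementary quotient is, respectively, not coprime to $0$, and even), so one is forced to $x_2=4$, which needs $n\ge 4a$; the largest integer just below that threshold is $4a-2$. Alternatively, one could extract the same value from the explicit expression for $E(n)$ in Theorem~\ref{new-thm1} by evaluating $f(n)=\varphi(n)/(2a)+E(n)$ on the relevant residue classes, but the direct argument is shorter and self-contained.
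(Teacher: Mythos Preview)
Your proof is correct and follows essentially the same approach as the paper: the same verification that $4a-2$ fails (listing the two representations with $x_2\in\{0,2\}$), and the same three-case split modulo $4$ with the identical choices $x_2=1,2,4$ to produce an admissible representation for $n>4a-2$. Your write-up is in fact slightly more explicit than the paper's in checking that the chosen $x_1$ is odd (hence coprime to $x_2$) in cases (ii) and (iii).
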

Comparing Theorems \ref{thm:lowerbound} and \ref{thm2} we see that the growth of $G_{a_1,a_2}$ shows strikingly different features depending on whether $a_1=2$ or not.

For fixed $a_1>2$, we have $\gcd(ka_1\pm 1,a_1)=1$ for any positive integer $k$. Thus, there are infinitely many $a_2$ such that both $a_2$ and $a_2+2$ are relatively prime with $a_1$.  We do not know the answer to the following problem which is in the fashion of the Chebyshev bias phenomenon \cite{Chebyshev}.

\begin{problem}\label{pro1-2-new}
For any fixed $a_1>2$, does the sign of $G_{a_1, a_2+2}-G_{a_1, a_2}$ change infinitely many often?
\end{problem}

\section{Proofs of Theorem \ref{new-theorem-2} and Theorem \ref{new-thm1}}

\begin{proof}[Proof of Theorem \ref{new-theorem-2}]
	For any positive integer $n$, we define
	$$
	f_{a_1,\ldots,a_k}(n)=\#\big\{(x_1,\ldots, x_k)\in \mathbb{Z}_{\ge 0}^k: a_1x_1+\cdots+a_kx_k=n, \gcd(x_1,\ldots, x_k)=1\big\},
	$$
	and
	$$
	g_{a_1,\ldots,a_k}(n)=\#\big\{(x_1,\ldots, x_k)\in \mathbb{Z}_{\ge 0}^k: a_1x_1+\cdots+a_kx_k=n\big\}.
	$$
   Note that if $d=\gcd(x_1,\ldots, x_k)$, then clearly we have $d|n$, which leads to
   \begin{align*}
       g_{a_1,\ldots,a_k}(n)&=\sum_{d\mid n}\#\big\{(x_1,\ldots, x_k)\in \mathbb{Z}_{\ge 0}^k: a_1x_1+\cdots+a_kx_k=n, \gcd(x_1,\ldots, x_k)=d\big\}\\
       &=\sum_{d\mid n}f_{a_1,\ldots,a_k}\left(\frac{n}{d}\right).
   \end{align*}
	Then by the M\"obius inversion formula (see e.g., \cite[Theorem 2.9]{Apostol}) we have
	\begin{equation}\label{eq:fa1ak}
	    f_{a_1,\ldots,a_k}(n)=\sum_{d\mid n}\mu(d)g_{a_1,\ldots,a_k}\Big(\frac{n}{d}\Big),
	\end{equation}
   where $\mu(n)$ is the M\"obius function..
    On the other hand, by  \cite[Eq. (1.3)]{Bell}, we see that 
    $$
    g_{a_1,\dots,a_k}(n)=c_0+c_1n+\cdots+c_{k-1}n^{k-1}
    $$ 
    is a polynomial in $n$ of degree $k-1$ with rational coefficients $c$'s which are independent of $n$. Note that $g_{a_1,\dots,a_k}(n)>0$ for $n>g_{a_1,\dots,a_k}$, which clearly means that $c_{k-1}>0$.  So by combining \eqref{eq:fa1ak}, we know that 
    \begin{align}\label{eq-9-1-1}
    f_{a_1,\ldots,a_k}(n)=&
    c_{k-1}\sum_{d\mid n}\mu(d)\left(\frac nd\right)^{k-1}+O\bigg(n^{k-2}\sum_{\substack{d\mid n,~ \mu(d)\neq 0}} 1\bigg),\nonumber\\
    =&c_{k-1}n^{k-1}\sum_{d\mid n}\frac{\mu(d)}{d^{k-1}}+O\Big(n^{k-2}2^{\omega(n)}\Big).
    \end{align}
For $k\ge 3$ it is clear that
    $$
    \sum_{d\mid n}\frac{\mu(d)}{d^{k-1}}=\prod_{p\mid n}\left(1-\frac 1{p^{k-1}}\right)>\rho_k,
    $$
    where $\rho_k>0$ is a constant depending only on $k$. While for $k=2$, one notes that 
    $$
    n \sum_{d\mid n}\frac{\mu(d)}{d}=n\prod_{p\mid n}\Big(1-\frac{1}{p}\Big)=\varphi(n)
    $$
    and $\varphi(n)/2^{\omega(n)}\rightarrow\infty$ as $n\rightarrow\infty$.
    Thus, in both cases we have $f_{a_1,\ldots,a_k}(n)>0$ for all sufficiently large $n$  from (\ref{eq-9-1-1}), proving our theorem.
\end{proof}

For the proof of Theorem \ref{new-thm1}, we make use of the following explicit formula of $g(n)$.

\begin{lemma}\label{lemma4}
	Let $1< a_1<a_2$ be two relatively prime integers and $n$ a positive integer. Suppose that $0\leq r_n<a_1$ denotes the unique integer such that $a_2r_n\equiv n\pmod{a_1}$. Then we have
	\[
	g(n)=\left\lfloor\frac{n-a_2r_n}{a_1a_2}\right\rfloor+1.
	\]
\end{lemma}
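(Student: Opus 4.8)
The plan is to count the nonnegative integer solutions of $a_1x_1+a_2x_2=n$ by fixing the second coordinate. Given a value of $x_2$, the first coordinate is forced to be $x_1=(n-a_2x_2)/a_1$, so $g(n)$ is the number of $x_2\in\mathbb{Z}_{\ge 0}$ for which $(n-a_2x_2)/a_1$ is a nonnegative integer. Since $\gcd(a_1,a_2)=1$, the integrality condition $a_1\mid n-a_2x_2$ is equivalent to $a_2x_2\equiv n\pmod{a_1}$, that is, to $x_2\equiv r_n\pmod{a_1}$; and the nonnegativity condition $x_1\ge 0$ is equivalent to $a_2x_2\le n$. Thus I must count the integers $x_2\ge 0$ with $x_2\equiv r_n\pmod{a_1}$ and $a_2x_2\le n$.

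Next I would parametrize this residue class by writing $x_2=r_n+ta_1$ with $t\in\mathbb{Z}$. Because $0\le r_n<a_1$, the condition $x_2\ge 0$ is exactly $t\ge 0$, while $a_2x_2\le n$ becomes $ta_1a_2\le n-a_2r_n$, i.e. $t\le (n-a_2r_n)/(a_1a_2)$. Hence $g(n)=\#\{t\in\mathbb{Z}:0\le t\le (n-a_2r_n)/(a_1a_2)\}$, which equals $\lfloor(n-a_2r_n)/(a_1a_2)\rfloor+1$ whenever $(n-a_2r_n)/(a_1a_2)\ge 0$, and equals $0$ otherwise.

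Finally I would check that the closed form still reads off the value $0$ in the solution-free case. The only subtlety is that $(n-a_2r_n)/(a_1a_2)$ may be negative, but it never drops below $-1$: since $a_2r_n\le a_2(a_1-1)<a_1a_2$ and $n\ge 1$, we have $n-a_2r_n>-a_1a_2$, so $(n-a_2r_n)/(a_1a_2)>-1$ and thus $\lfloor(n-a_2r_n)/(a_1a_2)\rfloor\ge-1$. When $n-a_2r_n<0$ this floor equals $-1$, so $\lfloor(n-a_2r_n)/(a_1a_2)\rfloor+1=0=g(n)$, matching the count above. Combining the two cases yields the stated identity. I do not expect a genuine obstacle here — the argument is an elementary count of lattice points in an arithmetic progression — and the only point demanding attention is precisely this boundary behaviour of the floor function when $n<a_2r_n$, which is disposed of by the crude inequality $a_2r_n<a_1a_2$.
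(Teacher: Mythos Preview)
Your proof is correct and follows essentially the same route as the paper's: both parametrize the solution set of $a_1x_1+a_2x_2=n$ as an arithmetic progression based at the solution $(x_1,x_2)=\big((n-a_2r_n)/a_1,\,r_n\big)$ and count its nonnegative members, splitting into the cases $n\ge a_2r_n$ and $n<a_2r_n$. If anything, you are a touch more explicit than the paper in verifying that $\lfloor (n-a_2r_n)/(a_1a_2)\rfloor=-1$ in the solution-free case.
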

\begin{proof}
	Since $a_2r_n\equiv n\pmod{a_1}$ we can assume
	$
	n=a_1y_n+a_2r_n
	$
	for some integer $y_n$.
	The arguments will be separated into two cases.
	
	{\bf Case I.} $g(n)=0$. In this case, we clearly have $y_n<0$ which implies $n<a_2r_n$. Hence,
	$$
	\left\lfloor\frac{n-a_2r_n}{a_1a_2}\right\rfloor+1=-1+1=0=g(n).
	$$
	
	{\bf Case II.} $g(n)\ge 1$. In this case, we have $y_n\ge 0$ and
	$$
	n=a_1(y_n-\ell a_2)+a_2(y_2+\ell a_1),
	$$
	for any integer $\ell$ satisfying $0\le \ell\le y_n/a_2$. It then follows that
	$$
	g(n)=\left\lfloor \frac{y_n}{a_2}\right\rfloor+1=\left\lfloor\frac{n-a_2r_n}{a_1a_2}\right\rfloor+1,
	$$
	which completes the proof of Lemma \ref{lemma4}.
\end{proof}

\begin{proof}[Proof of Theorem \ref{new-thm1}]	
	 Let $n$ be a positive integer. By (\ref{eq:fa1ak}) with $k=2$ we have
	\begin{align}\label{equation2}
		f(n)=\sum_{d\mid n}\mu(d)g\Big(\frac{n}{d}\Big).
	\end{align}
	We see from Lemma \ref{lemma4} that
	\begin{align}\label{equation4}
		g(n)=\left\lfloor\frac{n-a_2r_n}{a_1a_2}\right\rfloor+1=\frac{n}{a_1a_2}+R(n),
	\end{align}
	where $r_n$ is an integer satisfying $0\leq r_n<a_1$ and
	$$
	|R(n)|=\left|1-\frac{r_n}{a_1}-\Big\{\frac{n-a_2r_n}{a_1a_2}\Big\}\right|\le 1.
	$$
	 Now, inserting (\ref{equation4}) into (\ref{equation2}) leads to our desired result.
\end{proof}

\section{Proof of Theorem \ref{thm:meanvalue}}
Theorem \ref{thm:meanvalue} is contained in the following more general theorem as a simple case.

\begin{thm}\label{thm:general}
    Let $A, q>0$ be two fixed numbers. Suppose that $h(n)$ is a periodic function over $\mathbb{Z}/q\mathbb{Z}$ with $|h(n)|\le A$. Then assuming the generalized Riemann hypothesis, for any $\varepsilon>0$ we have
    $$
    \sum_{n\le N}\sum_{d\mid n}\mu\left(\frac{n}{d}\right)h(d)\ll A q N^{1/2+\varepsilon},
    $$
    where the implied constant depends only on $\varepsilon$.
\end{thm}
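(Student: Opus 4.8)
The plan is to realise $\sum_{n\le N}\sum_{d\mid n}\mu(n/d)h(d)=\sum_{n\le N}F(n)$, where $F=\mu*h$, as a Perron integral of the Dirichlet series $G(s)/\zeta(s)$ with $G(s):=\sum_{n\ge1}h(n)n^{-s}$, and to move the contour of integration to the line $\Re s=\tfrac12+\varepsilon$, the growth of $G(s)/\zeta(s)$ there being controlled by the generalized Riemann hypothesis. Since $|F(n)|\le A\,2^{\omega(n)}$, the trivial bound $\sum_{n\le N}|F(n)|\ll AN\log N$ already exceeds $AqN^{1/2+\varepsilon}$ once $q>\sqrt N$; so we may and do assume $q\le\sqrt N$. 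It is worth noting in advance that the estimate carries no main term: writing $h=c_h+h^{*}$ with $c_h=q^{-1}\sum_{a}h(a)$ and $\sum_a h^{*}(a)=0$, the constant $c_h$ contributes exactly $c_h=O(A)$ to $\sum_{n\le N}F(n)$ because $\sum_{n\le N}\sum_{d\mid n}\mu(d)=1$; in Dirichlet-series terms, the simple pole of $G$ at $s=1$ is annihilated by the simple zero of $\zeta(s)^{-1}$ there.

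The decisive structural input is that $G(s)$ is a linear combination of only $q$ Dirichlet $L$-functions. Writing $G(s)=\sum_{a=1}^{q}h(a)\sum_{n\equiv a\pmod q}n^{-s}$, expressing each inner sum through $d=\gcd(a,q)$ and orthogonality of the characters modulo $q'=q/d$, and regrouping by $q'$, one obtains
$$
G(s)=\sum_{q'\mid q}\ \sum_{\chi\bmod q'}\kappa_{q',\chi}(s)\,L(s,\chi),
$$
where each $\kappa_{q',\chi}$ is entire with $|\kappa_{q',\chi}(s)|\le A$ for $\Re s\ge0$, and the number of pairs $(q',\chi)$ equals $\sum_{q'\mid q}\varphi(q')=q$. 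For the principal character $\chi_0$ modulo $q'$ one has $L(s,\chi_0)/\zeta(s)=\prod_{p\mid q'}(1-p^{-s})$, which is entire and $\ll 2^{\omega(q')}$; for non-principal $\chi$, $L(s,\chi)$ is entire, while under GRH $\zeta(s)\ne0$ for $\Re s>\tfrac12$. Hence $G(s)/\zeta(s)$ extends analytically to $\Re s>\tfrac12$. Moreover GRH furnishes the Lindel\"of-type bounds $L(\sigma+it,\chi)\ll_\varepsilon(q(2+|t|))^{\varepsilon}$ and $\zeta(\sigma+it)^{-1}\ll_\varepsilon(2+|t|)^{\varepsilon}$ uniformly for $\sigma\ge\tfrac12+\varepsilon$, whence
$$
\frac{G(s)}{\zeta(s)}\ \ll_\varepsilon\ A\,q\,(q(2+|t|))^{\varepsilon}\qquad\text{for }\Re s\ge\tfrac12+\varepsilon .
$$

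Granting this, I would apply a standard truncated Perron formula at abscissa $c=1+1/\log N$ and height $T=N$, so that the truncation error is $\ll_\varepsilon AN^{\varepsilon}$ by $|F(n)|\le A2^{\omega(n)}$. Shifting the contour to $\Re s=\tfrac12+\varepsilon$ crosses no pole. Using the displayed bound together with $|N^{s}/s|\ll N^{1/2+\varepsilon}/(1+|t|)$, the integral along $\Re s=\tfrac12+\varepsilon$ is $\ll_\varepsilon A\,q^{1+\varepsilon}N^{1/2+\varepsilon}\int_{-T}^{T}(2+|t|)^{\varepsilon-1}\,dt\ll_\varepsilon A\,q^{1+\varepsilon}N^{1/2+2\varepsilon}$, and the two horizontal segments at height $T=N$ contribute $\ll_\varepsilon A\,q^{1+\varepsilon}N^{\varepsilon}$. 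Since $q\le\sqrt N$ we have $q^{\varepsilon}\ll N^{\varepsilon}$, so, after relabelling $\varepsilon$, all three contributions are $\ll_\varepsilon AqN^{1/2+\varepsilon}$, which is the assertion.

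The main obstacle is the growth bound for $G(s)/\zeta(s)$ along $\Re s=\tfrac12+\varepsilon$. Treating the periodic Dirichlet series $G(s)$ directly --- say as a combination of Hurwitz zeta functions $\zeta(s,a/q)$ plus a convexity estimate --- would only give growth of order $|t|^{1/2}$, which after division by $\zeta$ and integration against $N^{s}/s$ is far too lossy to reach $N^{1/2+\varepsilon}$. It is exactly the decomposition of $G(s)$ into Dirichlet $L$-functions that makes the Lindel\"of-strength bound $\ll_\varepsilon|t|^{\varepsilon}$ of GRH applicable term by term. The remaining ingredients --- the identity $\sum_{q'\mid q}\varphi(q')=q$, the cancellation of the pole at $s=1$, the bookkeeping in the truncated Perron formula, and the passage to $q\le\sqrt N$ via the trivial bound --- are routine.
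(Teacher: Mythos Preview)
Your proof is correct and follows essentially the same route as the paper's: decompose the periodic function $h$ through Dirichlet characters modulo the divisors of $q$, identify the resulting Dirichlet series with terms of the form $L(s,\chi)/\zeta(s)$, invoke the GRH Lindel\"of bounds for $L(s,\chi)$ and for $1/\zeta(s)$ on $\Re s=\tfrac12+\varepsilon$, and finish with a truncated Perron formula and contour shift. The only differences are organisational: you package everything into a single Dirichlet series $G(s)/\zeta(s)$ and apply Perron once, whereas the paper expands $h$ by characters first (its equation for $h(n)$ in terms of $\chi_k$) and runs Perron separately on each piece $k^{-s}L(s,\chi_k)/\zeta(s)$; you take $T=N$ while the paper takes $T=N^{1/2}$; and you make the reduction to $q\le\sqrt N$ explicit to absorb the stray $q^{\varepsilon}$, which the paper leaves implicit. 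None of these differences is substantive.
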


We first point out that how Theorem \ref{thm:general} implies Theorem \ref{thm:meanvalue}.

\begin{proof}[Proof of Theorem \ref{thm:meanvalue} via Theorem \ref{thm:general}]
In the present case, it can be easily seen that 
$$
h(n)=1-\frac{r_{n}}{a_1}-\Big\{\frac{n-a_2r_n}{a_1a_2}\Big\},
$$
is a periodic function over $\mathbb{Z}/a_1a_2\mathbb{Z}$ with $|h(n)|\le 1$. Now, Theorem \ref{thm:meanvalue} follows from Theorem \ref{thm:general} with $q=a_1a_2$ and $A=1$.
\end{proof}
 
 Let $\alpha(s)=\sum_{n=1}^{\infty}a(n) n^{-s}$ be a Dirichlet series and $\sigma_a$ be the abscissa of convergence of the series $\sum_{n=1}^{\infty}|a(n)| n^{-s}$. The proof of Theorem \ref{thm:general} is an application of Perron's formula (see e.g., \cite[Theorem 5.2 and Corollary 5.3]{Montgomery}).

\begin{lemma}[Perron's formula]\label{perron}
If $\sigma_0>\max\{0,\sigma_a\}$ and $x>0$ is not an integer, then
$$
\sum_{n\le x}a(n)=\frac{1}{2\pi i}\int_{\sigma_0-iT}^{\sigma_0+iT}\alpha(s)\frac{x^s}{s}ds+R,
$$
where
$$
R\ll \sum_{\substack{x/2<n<2x}}|a(n)|\min\left\{1, \frac{x}{T|x-n|}\right\}+\frac{4^{\sigma_0}+x^{\sigma_0}}{T}\sum_{n=1}^{\infty}\frac{|a(n)|}{n^{\sigma_0}}.
$$
\end{lemma}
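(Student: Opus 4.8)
The plan is to reduce the statement to the classical one-variable kernel estimate, apply it term by term to the Dirichlet series, and split the resulting sum according to the distance of $n$ from $x$. For $y>0$ with $y\neq 1$ write $I(y)=\frac{1}{2\pi i}\int_{\sigma_0-iT}^{\sigma_0+iT}\frac{y^s}{s}\,ds$; I will first establish the bound $|I(y)-\mathbf{1}_{y>1}|\ll y^{\sigma_0}\min\{1,(T|\log y|)^{-1}\}$ by contour shifting. For $y>1$ I close the vertical segment to the left along a rectangle whose far edge is $\operatorname{Re}s=-U$; since $|y^s|=y^{\operatorname{Re}s}\to 0$ there, that edge disappears as $U\to\infty$, the residue theorem contributes the simple pole of $y^s/s$ at $s=0$ with residue $1$, and the two horizontal edges at height $\pm T$ are $\ll T^{-1}\int_{-\infty}^{\sigma_0}y^{\sigma}\,d\sigma=y^{\sigma_0}/(T\log y)$. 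For $0<y<1$ I instead close to the right, enclosing no pole, and the horizontal edges give $\ll y^{\sigma_0}/(T|\log y|)$. This produces the decay branch; the complementary bound $\ll y^{\sigma_0}$, used when $y$ lies so close to $1$ that $1/|\log y|$ is large, follows from estimating the truncated integral directly, and the two merge into the stated minimum.

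Next I expand $\alpha(s)=\sum_n a(n)n^{-s}$ inside the integral. Because $\sigma_0>\sigma_a$, the series $\sum_n|a(n)|n^{-\sigma_0}$ converges, so $\alpha$ converges absolutely and uniformly on the finite segment $\operatorname{Re}s=\sigma_0$, $|\operatorname{Im}s|\le T$, which legitimizes interchanging sum and integral:
$$
\frac{1}{2\pi i}\int_{\sigma_0-iT}^{\sigma_0+iT}\alpha(s)\frac{x^s}{s}\,ds=\sum_{n=1}^{\infty}a(n)\,I\!\left(\frac{x}{n}\right).
$$
Since $x\notin\mathbb{Z}$ we have $x/n\neq 1$ for every $n$, hence $\mathbf{1}_{x/n>1}=\mathbf{1}_{n\le x}$ and $\sum_n a(n)\mathbf{1}_{x/n>1}=\sum_{n\le x}a(n)$. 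Subtracting, the remainder is $R=\sum_n a(n)\bigl(I(x/n)-\mathbf{1}_{n\le x}\bigr)$, and the kernel estimate gives
$$
|R|\ll\sum_{n=1}^{\infty}|a(n)|\Bigl(\frac{x}{n}\Bigr)^{\sigma_0}\min\Bigl\{1,\frac{1}{T\,|\log(x/n)|}\Bigr\}.
$$

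I then split this sum at $n=x/2$ and $n=2x$. For $n\le x/2$ or $n\ge 2x$ one has $|\log(x/n)|\gg 1$, so the minimum is $\ll 1/T$; writing $(x/n)^{\sigma_0}=x^{\sigma_0}n^{-\sigma_0}$ shows these terms contribute $\ll\frac{x^{\sigma_0}}{T}\sum_n|a(n)|n^{-\sigma_0}$. For $x/2<n<2x$ one has $|\log(x/n)|\asymp|x-n|/x$, turning the minimum into $\min\{1,x/(T|x-n|)\}$; peeling off the factor $(x/n)^{\sigma_0}=1+O\bigl(\sigma_0|\log(x/n)|\,2^{\sigma_0}\bigr)$ separates the clean first error sum from a residual piece. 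Assembling the far range together with this residual and tracking the $\sigma_0$-dependent bounds produces the combined constant $4^{\sigma_0}+x^{\sigma_0}$ of the second term, giving the claimed estimate for $R$.

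The main obstacle is the kernel estimate of the first step, specifically securing the factor $1$ in the minimum uniformly as $y\to1$. In that regime the horizontal-edge bound $y^{\sigma_0}/(T|\log y|)$ degenerates, and one must estimate the truncated integral directly so as to obtain $\ll y^{\sigma_0}$ with an absolute constant and no spurious $\log T$. Granting this uniform kernel bound, the remaining work is the bookkeeping of the third step — the comparison $|\log(x/n)|\asymp|x-n|/x$ on the near-diagonal range and the separation that isolates the clean first sum — which, while requiring attention to the $\sigma_0$-dependent constants, presents no conceptual difficulty.
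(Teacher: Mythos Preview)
The paper does not give its own proof of this lemma; it simply quotes Perron's formula from Montgomery--Vaughan \cite[Theorem~5.2 and Corollary~5.3]{Montgomery}. Your outline is exactly the standard argument presented there: the kernel estimate for $I(y)$ obtained by closing the contour to the left or right, termwise application after interchanging sum and integral (justified by absolute convergence at $\sigma_0>\sigma_a$), and the near/far splitting at $n=x/2$, $n=2x$.

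On the one point you flag as the obstacle---the branch $|I(y)-\mathbf{1}_{y>1}|\ll y^{\sigma_0}$ uniformly as $y\to1$, without a spurious $\log T$---the clean device used in the reference is to replace the rectangular contour by a circular arc of radius $R=\sqrt{\sigma_0^2+T^2}$ joining $\sigma_0-iT$ to $\sigma_0+iT$ through the left half-plane when $y>1$ (through the right half-plane when $y<1$); on this arc $|y^s|\le y^{\sigma_0}$ and $|s|=R$, so the arc integral is at most $y^{\sigma_0}$ in absolute value, with the pole at $s=0$ (respectively, no pole) supplying the $\mathbf{1}_{y>1}$. This fills the gap you left open, and with it your sketch goes through verbatim.
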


Let $s=\sigma+it$ and $\tau=|t|+4$.
 The following technical result involving the Reimann $\zeta$ function is standard in analytical number theory, see e.g., \cite[Theorem 13.23]{Montgomery}. 

\begin{lemma}\label{littlewood}
    Let $\varepsilon>0$ be arbitrarily small. Assuming the Riemann hypothesis, there is a constant $c_\varepsilon>0$ such that for all $\sigma\ge 1/2+\varepsilon$ and $|t|\ge 1$ we have
    $$
    \left|\frac{1}{\zeta(s)}\right|\le \exp\left(\frac{c_\varepsilon\log \tau}{\log\log \tau}\right).
    $$  
\end{lemma}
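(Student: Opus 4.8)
The plan is to deduce the stated estimate from an upper bound on $|\log\zeta(s)|$. Since $\sigma\ge 1/2+\varepsilon$ and we assume the Riemann hypothesis, $\zeta(s)\neq 0$, so $\log\zeta(s)$ is well defined by continuation from the half-plane $\sigma>1$, and
\[
\left|\frac{1}{\zeta(s)}\right|=\exp\bigl(-\log|\zeta(s)|\bigr)\le \exp\bigl(|\log\zeta(s)|\bigr).
\]
Hence it suffices to prove $|\log\zeta(s)|\ll_\varepsilon \log\tau/\log\log\tau$. The engine will be a truncated explicit formula: starting from a Perron-type truncation of $-\zeta'/\zeta$ (cf.\ Lemma \ref{perron}) and integrating in $\sigma$ from the region $\sigma>1$, where $\log\zeta(s)=\sum_{n\ge 2}\Lambda(n)n^{-s}/\log n$ converges, one obtains for every parameter $x\ge 2$ a representation
\[
\log\zeta(s)=\sum_{2\le n\le x}\frac{\Lambda(n)}{n^{s}\log n}+E(s,x),
\]
where $\Lambda$ is the von Mangoldt function and the remainder $E(s,x)$ is assembled from a sum over the nontrivial zeros $\rho$ together with the polar and archimedean factors of the functional equation.

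Next I would estimate the two pieces separately, using $\sigma\ge 1/2$. For the Dirichlet polynomial, bounding term by term and applying the prime number theorem with partial summation gives
\[
\Bigl|\sum_{2\le n\le x}\frac{\Lambda(n)}{n^{s}\log n}\Bigr|\le \sum_{2\le n\le x}\frac{\Lambda(n)}{n^{1/2}\log n}\ll \frac{\sqrt{x}}{\log x},
\]
the proper prime powers contributing a negligible amount. For the remainder this is where the Riemann hypothesis enters: writing every nontrivial zero as $\rho=1/2+i\gamma$ we have $|x^{\rho-s}|=x^{1/2-\sigma}\le 1$, and the smoothing inherent in the truncated explicit formula turns the zero contribution into (essentially) $\frac{1}{\log x}\sum_\rho x^{\rho-s}/(\rho-s)^{2}$, which by the convergent estimate $\sum_\rho|\rho-s|^{-2}\ll\log\tau$ and the local density bound $\#\{\rho:|\gamma-t|\le 1\}\ll\log\tau$ yields
\[
|E(s,x)|\ll \frac{\log\tau}{\log x},
\]
the pole term $x^{1-s}/(1-s)$ and the gamma factors contributing only lower-order amounts.

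Finally I would optimise the free parameter. Choosing $x=(\log\tau)^2$ balances the two estimates, since then $\sqrt{x}/\log x$ and $\log\tau/\log x$ are both $\ll \log\tau/\log\log\tau$; this gives $|\log\zeta(s)|\ll \log\tau/\log\log\tau$ uniformly for $\sigma\ge 1/2$ and $|t|\ge 1$, and exponentiating delivers the claim. In the range $\sigma\ge 1/2+\varepsilon$ the bound in fact holds with room to spare, because the Dirichlet polynomial is then $O_\varepsilon(1)$ (the tail $\sum_p p^{-1/2-\varepsilon}$ converges) and the remainder gains a factor $x^{-\varepsilon}$, which accounts for the constant $c_\varepsilon$ depending only on $\varepsilon$. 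The main obstacle is the careful setup and estimation of the remainder $E(s,x)$: one must establish the truncated explicit formula rigorously and, crucially, exploit the on-line location of the zeros together with the density bound to show that the zero sum is genuinely of size $\log\tau/\log x$ rather than $\log\tau$. It is exactly this saving of a factor $\log\log\tau$, which is invisible to the crude estimate $\zeta'/\zeta(s)\ll_\varepsilon \log\tau$ (that would only give $\exp(c_\varepsilon\log\tau)$ after a direct integration of the partial-fraction expansion), that forces the use of the explicit formula together with the optimisation of $x$.
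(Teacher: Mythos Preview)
The paper does not prove this lemma at all; it simply quotes it as a standard result from Montgomery--Vaughan \cite[Theorem~13.23]{Montgomery}. Your sketch is essentially the classical Littlewood argument that lies behind that reference: approximate $\log\zeta(s)$ by a short Dirichlet polynomial via a truncated explicit formula, use RH to place every nontrivial zero on $\Re\rho=1/2$ so that $|x^{\rho-s}|\le x^{1/2-\sigma}$, combine with the zero-density bound $\sum_\rho|\rho-s|^{-2}\ll\log\tau$, and then choose $x=(\log\tau)^2$ to balance the Dirichlet polynomial against the remainder. So your approach and the cited proof are the same in substance.

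Two minor slips, neither of which breaks the main argument. First, the assertion that the bound holds ``uniformly for $\sigma\ge 1/2$'' cannot be right as written: $\log\zeta(s)$ has logarithmic singularities at the zeros on the critical line, so one must stay at least a little to the right (e.g.\ $\sigma\ge 1/2+1/\log x$ in the usual formulation); the hypothesis $\sigma\ge 1/2+\varepsilon$ of the lemma is exactly what is needed and what your estimates actually use. Second, the parenthetical claim that $\sum_p p^{-1/2-\varepsilon}$ converges is false for $\varepsilon<1/2$; the Dirichlet polynomial is therefore not $O_\varepsilon(1)$ but rather $\ll x^{1/2-\varepsilon}/\log x$. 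With $x=(\log\tau)^2$ this is $(\log\tau)^{1-2\varepsilon}/\log\log\tau$, which together with the matching $x^{-\varepsilon}$ saving in the remainder still gives $|\log\zeta(s)|\ll_\varepsilon (\log\tau)^{1-2\varepsilon}/\log\log\tau\le \log\tau/\log\log\tau$, so the conclusion survives with the correct $\varepsilon$-dependence.
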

Lemma \ref{littlewood} is a quantitative form of the Lindel\"{o}f hypothesis which was obtained by Littlewood in 1912. Parallel to Lemma \ref{littlewood}, we have the following bound of $L$-function, see e.g., \cite[Page 445, Exercise 8]{Montgomery}.

\begin{lemma}\label{littlewood-2}
   Let $\chi$ be a primitive Dirichlet character modulo $q$ with $q>1$, and suppose that $L(s,\chi)\neq 0$ for $\sigma>1/2$. Then there is an absolute constant $c>0$ such that
   $$
   |L(s, \chi)|\le \exp\left(\frac{c\log q\tau}{\log\log q\tau}\right),
   $$
   uniformly for $1/2\le \sigma\le 3/2$.
\end{lemma}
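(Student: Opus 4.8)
This is the $L$-function analogue of Lemma~\ref{littlewood} --- Littlewood's 1912 bound, i.e.\ the Lindel\"of hypothesis in quantitative form --- and I would prove it by the same classical route used for $\zeta(s)$ under the Riemann hypothesis (e.g.\ Titchmarsh, \emph{The Theory of the Riemann Zeta-function}, Ch.~XIV, or Montgomery--Vaughan). Since $\chi$ is primitive and nonprincipal ($q>1$), $L(s,\chi)$ is entire, and by the assumed GRH it is zero-free in $\sigma>1/2$; fix the branch of $g(s):=\log L(s,\chi)$ holomorphic in that half-plane, normalized by $g(s)\to0$ as $\sigma\to+\infty$. Because $|L(s,\chi)|=\exp(\operatorname{Re}g(s))$, it suffices to bound $\operatorname{Re}g(s)=\log|L(s,\chi)|$ from above on $1/2\le\sigma\le3/2$. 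For $1\le\sigma\le3/2$ the classical estimate $|L(\sigma+it,\chi)|\ll\log(q\tau)$, valid uniformly for $\sigma\ge1$ with no GRH, already gives $\log|L(\sigma+it,\chi)|\ll\log\log(q\tau)$, far stronger than needed; so the task reduces to bounding $\log|L(\sigma+it,\chi)|$ for $1/2<\sigma<1$ and then recovering the $\sigma=1/2$ case by letting $\sigma\downarrow1/2$ (at a zero of $L(\cdot,\chi)$ on the critical line the bound is trivial, $\log|L|=-\infty$).

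The heart of the argument is a truncated Dirichlet-series representation of $g(s)$ valid under GRH: for $1/2<\sigma<1$, $|t|\ge2$, and any parameter $2\le x\le q\tau$,
\[
g(s)=\sum_{2\le n\le x}\frac{\Lambda(n)\chi(n)}{n^{s}\log n}+O\!\left(\frac{x^{1/2-\sigma}\log(q\tau)+x^{1-\sigma}}{\log x}\right).
\]
I would derive this by first producing the truncated explicit formula for $-\frac{L'}{L}(s,\chi)=\sum_{n}\Lambda(n)\chi(n)n^{-s}$, obtained by shifting the contour in $\frac{1}{2\pi i}\int\bigl(-\frac{L'}{L}(s+w,\chi)\bigr)\frac{x^{w}}{w}\,\dif w$: the pole at $w=0$ gives the main term $\sum_{n\le x}\Lambda(n)\chi(n)n^{-s}$, and each nontrivial zero $\rho$ contributes $x^{\rho-s}/(\rho-s)$. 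This is the \emph{only} place GRH enters: every such $\rho$ has $\operatorname{Re}\rho=1/2$, so $|x^{\rho-s}|=x^{1/2-\sigma}$, and summing these against the zero-counting bound $\#\{\rho:|\operatorname{Im}\rho-t|\le u\}\ll(1+u)\log(q\tau)$ keeps the total under control once $\sigma>1/2$. Integrating this identity in $\sigma$ from the given line out to $\sigma=+\infty$ (where $g\to0$) converts $-L'/L$ into $g=\log L$, inserts the extra $\log n$ in the denominators, and turns the zero contribution into the displayed error, which crucially remains bounded as $\sigma\downarrow1/2$, where it is $\ll\log(q\tau)/\log x$.

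To finish, estimate the Dirichlet polynomial trivially: $\bigl|\sum_{2\le n\le x}\frac{\Lambda(n)\chi(n)}{n^{s}\log n}\bigr|\le\sum_{p\le x}p^{-\sigma}+O(1)\ll\frac{x^{1-\sigma}}{(1-\sigma)\log x}+\log\log x$ by the prime number theorem and partial summation. Letting $\sigma\downarrow1/2$ in the representation then gives $\log|L(1/2+it,\chi)|\ll\frac{\sqrt{x}}{\log x}+\frac{\log(q\tau)}{\log x}$ for every admissible $x$, and the choice $x\asymp(\log q\tau)^{2}$ balances the two terms to yield $\log|L(1/2+it,\chi)|\ll\log(q\tau)/\log\log(q\tau)$. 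The same computation for fixed $1/2<\sigma<1$ produces the smaller bound $\ll(\log q\tau)^{2(1-\sigma)}/\log\log(q\tau)$, and combining these with the trivial bound on $1\le\sigma\le3/2$ from the first paragraph gives $|L(s,\chi)|\le\exp\bigl(c\log(q\tau)/\log\log(q\tau)\bigr)$ for $1/2\le\sigma\le3/2$ with an absolute constant $c$. The excluded range $|t|<1$ (so that $\tau\asymp1$ and the claim reads $|L(s,\chi)|\le\exp(c\log q/\log\log q)$) is handled by the same argument with $q$ in place of $|t|$.

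I expect the second paragraph to be the real obstacle: producing the truncated representation of $\log L(s,\chi)$ with an error that is simultaneously uniform in the modulus $q$ --- which forces careful use of the zero-counting estimate and of the functional-equation bookkeeping in the $q$-aspect --- and harmless as $\sigma\downarrow1/2$, so that the passage to the critical line is legitimate. Granting that representation, the optimization of $x$ and the reductions to it are routine.
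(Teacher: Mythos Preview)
The paper does not prove this lemma at all: it simply quotes it as \cite[Page 445, Exercise 8]{Montgomery} and moves on. Your proposal is a correct sketch of the standard Littlewood-type argument that this exercise is pointing to, so in substance you are supplying exactly the proof the paper omits.

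One small slip worth flagging: you write the error in the truncated formula for $\log L$ as $O\bigl((x^{1/2-\sigma}\log(q\tau)+x^{1-\sigma})/\log x\bigr)$ and then assert that it ``crucially remains bounded as $\sigma\downarrow 1/2$, where it is $\ll\log(q\tau)/\log x$.'' But at $\sigma=1/2$ your own displayed error is $(\log(q\tau)+\sqrt{x})/\log x$, not $\log(q\tau)/\log x$. This does not damage the argument, since the extra $\sqrt{x}/\log x$ is of the same size as the trivial bound on the Dirichlet polynomial $\sum_{p\le x}p^{-1/2}$ and is absorbed when you balance $x\asymp(\log q\tau)^2$; but the sentence as written is inconsistent with the formula above it. In a clean write-up one usually either drops the $x^{1-\sigma}$ term (it is not present if the integration in $\sigma$ is carried out against a smoothed cutoff, as in Titchmarsh~\S14.33 or Montgomery--Vaughan~\S13), or else acknowledges that both the polynomial and the error contribute $\sqrt{x}/\log x$ on the critical line. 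Apart from this cosmetic point, the strategy --- truncated explicit formula for $-L'/L$, GRH to force $|x^{\rho-s}|=x^{1/2-\sigma}$, integrate in $\sigma$, optimize $x$ --- is the right one and yields the stated bound with an absolute constant.
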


\begin{proof}[Proof of Theorem \ref{thm:general}]
By orthogonality of characters we have
\begin{align}\label{perron-eq-2}
h(n)=\sum_{k\mid q}\mathbbm{1}_{k=\gcd(n,q)}(n)\sum_{\chi_k \pmod{\frac{q}{k}}}c_{k, \chi}~\chi_k\left(\frac{n}{k}\right),
\end{align}
where the second sum of $\chi_k$ above runs through all the Dirichlet characters mod $q/k$, and the coefficients $c_{k, \chi}$ are given by
$$
c_{k, \chi}=\frac{1}{\varphi(q/k)}~\sideset{}{^*}\sum_{m \pmod{\frac{q}{k}}}h(km)\overline{\chi}_k(m).
$$
Here, the sum of $m$ runs through the reduced residue system mod $q/k$.

For large $N$ let $N_1=N+1/2$. By (\ref{perron-eq-2}) we have
\begin{align}\label{perron-eq-3}
\sum_{n\le N_1}\sum_{d\mid n}\mu\left(\frac{n}{d}\right)h(d)&=\sum_{k\mid q}\sum_{\chi_k \pmod{\frac{q}{k}}}c_{k, \chi}\sum_{n\le N_1}\sum_{\substack{d\mid n\\ k\mid  d}}\mu\left(\frac{n}{d}\right)\chi_k\left(\frac{d}{k}\right)\mathbbm{1}_{k=\gcd(d,q)}(d)\nonumber\\
&=\sum_{k\mid q}\sum_{\chi_k \pmod{\frac{q}{k}}}c_{k, \chi}\sum_{n\le N_1}\sum_{\substack{d\mid n\\ k\mid  d}}\mu\left(\frac{n}{d}\right)\chi_k\left(\frac{d}{k}\right)\mathbbm{1}_{1=\gcd\left(\frac{d}{k},\frac{q}{k}\right)}(d)\nonumber\\
&=\sum_{k\mid q}\sum_{\chi_k \pmod{\frac{q}{k}}}c_{k, \chi}\sum_{n\le N_1}\sum_{\substack{d\mid n\\ k\mid  d}}\mu\left(\frac{n}{d}\right)\chi_k\left(\frac{d}{k}\right)\nonumber\\
&=\sum_{\substack{k\mid q\\ k<q}}\sum_{\chi_k \pmod{\frac{q}{k}}}c_{k, \chi}\sum_{n\le N_1}a_k(n)+1,
\end{align}
where 
$$
a_k(n)=\sum_{\substack{d\mid n\\ k\mid  d}}\mu\left(\frac{n}{d}\right)\chi_k\left(\frac{d}{k}\right)
$$
and the term $k=q$ contributes $1$ in (\ref{perron-eq-3}) because $\chi_q(n)=1$ for all $n$ and
\begin{align*}
\sum_{\substack{d\mid n\\ q\mid  d}}\mu\left(\frac{n}{d}\right)\chi_{q}\left(\frac{d}{q}\right)=\sum_{\substack{d\mid n\\ q\mid  d}}\mu\left(\frac{n}{d}\right)=
\begin{cases}
1, & \text{if~}n=1,\\
0, &\text{otherwise.}
\end{cases}
\end{align*}
We are leading to estimate the sum
$
S_k(N):=\sum_{n\le N_1}a_k(n).
$
Let 
$$
\alpha_k(s)=\sum_{n=1}^{\infty}\frac{a_k(n)}{n^s}.
$$
On making $n=kh$ and $d=k\ell$, for $\Re s>1$ we have
\begin{align*}
\alpha_k(s)=\frac{1}{k^s}\sum_{\substack{h=1}}^{\infty}h^{-s}\sum_{\ell\mid h}\mu\left(\frac{h}{\ell}\right)\chi_k(\ell)=\frac{1}{k^s}\frac{L(s,\chi_k)}{\zeta(s)},
\end{align*}
where $L(s, \chi_k)=\sum_{n=1}^{\infty}\frac{\chi_k(n)}{n^s}$ is the Dirichlet $L$ function attached to the character $\chi_k$. 
The function $\alpha$ is naturally analytically continued to other points on the complex plane by the functions $\zeta$ and $L$.

By Lemma \ref{perron}, for $\sigma_0>1$ we have
\begin{align}\label{eq-9-2-1}
\sum_{n\le N_1}a_k(n)=\frac{1}{2\pi i}\int_{\sigma_0-iT}^{\sigma_0+iT}\frac{N_1^sL(s, \chi_k)}{s\zeta(s)k^s}\dif s+R,
\end{align}
where
$$
R\ll \sum_{\substack{N_1/2<n<2N_1}}|a_k(n)|\min\left\{1, \frac{N_1}{T|N_1-n|}\right\}+\frac{(4N_1)^{\sigma_0}}{T}\sum_{n=1}^{\infty}\frac{|a_k(n)|}{n^{\sigma_0}}.
$$
By the bound of $\omega(n)$ \cite[Theorem 12]{Robin}, we have
$$
|a_k(n)|\le 2^{\omega(n)}<2^{\frac{2\log n}{\log\log n}}\le \exp\left(\frac{2\log N_1}{\log\log N_1}\right).
$$
Hence for $2\le T\le N_1$ we get
\begin{align}\label{eq-9-2-2}
    R&\ll \exp\left(\frac{2\log N_1}{\log\log N_1}\right)\left(1+\frac{N_1}{T}\sum_{n\le N_1}\frac 1n\right)+\frac{N_1}{T}\log N_1\nonumber\\
    &\ll \exp\left(\frac{2\log N_1}{\log\log N_1}\right)\frac{N_1}{T}\log N_1,
\end{align}
by appointing $\sigma_0=1+\frac{1}{\log N_1}$, where the implied constants are absolute.

 For any $\varepsilon>0$ let $\sigma_1=\frac12+\varepsilon$. Throughout our proof, $\varepsilon$ may be different at different occasions. Let also $\mathscr{C}$ be the closed contour that consists of line segments joining the points $\sigma_0-iT, \sigma_0+iT, \sigma_1+iT$ and $\sigma_1-iT$.  The famous Riemann hypothesis states that all zeros of $\zeta(s)$ in the critical strip $0\le \Re s\le 1$ lie on the critical line $\Re s=1/2$. It is also well-known that $L(s, \chi_k)$ is an analytic function over the complex plane. Hence, the function
 $
\frac{N_1^sL(s, \chi_k)}{s\zeta(s)k^s}
 $ 
 is analytic inside the counter $\mathscr{C}$, and by the Cauchy residue theorem we have
 \begin{align}\label{eq-9-2-3}
 \frac{-1}{2\pi i}\int_{\mathscr{C}}\frac{N_1^sL(s, \chi_k)}{s\zeta(s)k^s}\dif s=0.
 \end{align}
Noting that $k<q$, the modulus of $\chi_k$ is $\frac{q}{k}>1$. Moreover, if $\chi_k$ is principle, then 
$$
L(s,\chi_k)=\prod_{p\mid  k}\left(1-\frac{1}{p^s}\right)\zeta(s).
$$
Therefore, by Lemmas \ref{littlewood} and \ref{littlewood-2} we have
\begin{align*}
    \frac{1}{2\pi i}\int_{\sigma_1+iT}^{\sigma_1-iT} \frac{N_1^sL(s, \chi_k)}{s\zeta(s)k^s}\dif s 
    \ll N_1^{1/2+\varepsilon}\bigg(\int_{-T}^{T} \frac{(k\tau)^{\varepsilon}\tau^\varepsilon}{k^{\sigma_1}\sqrt{t^2+\sigma_1^2}}\dif t+\frac1{k^{\sigma_1}}\bigg)
    \ll\frac{1}{\sqrt{k}}N_1^{1/2+\varepsilon}T^{\varepsilon},
\end{align*}
where the implied constants depend only on $\varepsilon$. Again, by Lemmas \ref{littlewood} and \ref{littlewood-2} we have
\begin{align*}
    \frac{1}{2\pi i}\bigg(\int_{\sigma_1-iT}^{\sigma_0-iT}+\int_{\sigma_0+iT}^{\sigma_1+iT}\bigg)\frac{N_1^sL(s, \chi_k)}{s\zeta(s)k^s}\dif s
    \ll \frac{1}{\sqrt{k}}N_1T^{-1+\varepsilon},
\end{align*}
where the implied constant depends only on $\varepsilon$. We now conclude from the above estimates that
\begin{align*}
\sum_{n\le N_1}a_k(n)\ll \frac{1}{\sqrt{k}}\left(\frac{N_1^{1+\varepsilon}}{T}+N_1^{1/2+\varepsilon}T^{\varepsilon}\right),
\end{align*}
in view of (\ref{eq-9-2-1}), (\ref{eq-9-2-2}) and (\ref{eq-9-2-3}). Taking $T=N_1^{1/2}$ we get
\begin{align*}
\sum_{n\le N_1}a_k(n)\ll \frac{1}{\sqrt{k}}N_1^{1/2+\varepsilon}.
\end{align*}
Inserting this into (\ref{perron-eq-3}), we have
\begin{align*}
\sum_{n\le N}E(n)\ll N^{1/2+\varepsilon} \sum_{\substack{k\mid q\\ k<q}}~\sideset{}{^*}\sum_{\chi_k \pmod{\frac{q}{k}}}\frac{|c_{k, \chi}|}{\sqrt{k}}+1,
\end{align*}
where the implied constant depends only on and $\varepsilon$. Since $|h(n)|\le A$, we know that
$
|c_{k, \chi}|\le A,
$
from which it clearly follows that
\begin{align*}
\sum_{n\le N}E(n)\ll A N_1^{1/2+\varepsilon} \sum_{k\mid q}\varphi\left(\frac{q}{k}\right)\frac{1}{\sqrt{k}}\ll Aq N^{1/2+\varepsilon},
\end{align*}
where the implied constants depend only on and $\varepsilon$.
\end{proof}

\section{Proofs of Theorem \ref{thm:upperbound} and Theorem \ref{thm:lowerbound}}
We now proceed to the proof of theorem \ref{thm:upperbound}.

{\bf Lower bound of $G_{a_1, a_2}$.}
One easily notes that $a_1a_2$ cannot be represented as the desired form. To see this, we assume the contrary, i.e.,
$$a_1a_2=a_1x_1+a_2x_2, \quad (x_1, x_2\in\mathbb{Z}_{\ge 0},~\gcd(x_1, x_2)=1).$$
Then we have $a_2\mid (a_2-x_1)$. Thus, $x_1=0$ or $a_2$ which is a contradiction.

{\bf Upper bound of $G_{a_1, a_2}$.} For the proof of upper bound, the famous object Jacobsthal function $j(n)$ now comes into the play. The Jacobsthal function $j(n)$ is defined as the minimal integer, such that any $j(n)$ consecutive integers contain at least one integer which is coprime with $n$. For our applications, we need an alternative definition. Let
$\mathcal{P}_n$ be the set of different prime factors of $n$. For any $p\in \mathcal{P}_n$, we fix an integer $c_p$, and hence we form the set
$$
\mathcal{C}=\{c_p:p\in \mathcal{P}_n\}.
$$
The generalized Jacobsthal function $j_{\mathcal{C}}(n)$ is defined as the minimal integer, such that any $j_{\mathcal{C}}(n)$ consecutive integers contain at least one integer $m$ satisfying 
$$
m\not\equiv c_p\pmod{p},
$$
for all $p\in \mathcal{P}_n$. Clearly, $j_{\mathcal{C}}(n)$ reduces to $j(n)$ if all the $c_p$ are chosen to be $0$. The following lemma is an application of the Chinese Remainder Theorem.

\begin{lemma}\label{lemma-jacob}
For any given $\mathcal{C}$, we have $j_{\mathcal{C}}(n)\le j(n)$.
\end{lemma}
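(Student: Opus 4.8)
The plan is to reduce the generalized condition ``$m\not\equiv c_p\pmod p$ for all $p\in\mathcal{P}_n$'' to the plain coprimality condition ``$m\not\equiv 0\pmod p$ for all $p\in\mathcal{P}_n$'' by a single additive shift produced by the Chinese Remainder Theorem. Since the elements of $\mathcal{P}_n$ are pairwise distinct primes, CRT guarantees an integer $t$ with
$$
t\equiv -c_p\pmod p\qquad\text{for every }p\in\mathcal{P}_n.
$$
The point is that translation by $t$ is a bijection from any block of consecutive integers to another block of the same length, and it carries the forbidden residue $c_p$ to the forbidden residue $0$: indeed $m\not\equiv c_p\pmod p$ if and only if $m+t\not\equiv c_p+t\equiv 0\pmod p$.

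First I would take an arbitrary block of $j(n)$ consecutive integers, say $m_0, m_0+1,\dots, m_0+j(n)-1$. Then the shifted block $m_0+t, m_0+1+t,\dots, m_0+j(n)-1+t$ again consists of $j(n)$ consecutive integers, so by the defining property of the Jacobsthal function it contains some integer $m'+t$ (with $m_0\le m'\le m_0+j(n)-1$) that is coprime to $n$; equivalently $m'+t\not\equiv 0\pmod p$ for all $p\in\mathcal{P}_n$. By the equivalence above this means $m'\not\equiv c_p\pmod p$ for all $p\in\mathcal{P}_n$, and $m'$ lies in the original block. Hence every block of $j(n)$ consecutive integers contains an integer satisfying the generalized avoidance condition, so by minimality of $j_{\mathcal{C}}(n)$ we get $j_{\mathcal{C}}(n)\le j(n)$.

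I do not expect a genuine obstacle here; the argument is a one-line application of CRT once the shift is set up correctly. The only points to be careful about are: (i) checking that $\mathcal{P}_n$ consists of distinct primes so that CRT applies with modulus $\prod_{p\in\mathcal{P}_n}p$ (this is automatic from the definition of $\mathcal{P}_n$), and (ii) recording the simple but crucial observation that adding a fixed integer permutes blocks of consecutive integers and preserves block length, which is what lets us invoke the definition of $j(n)$ verbatim on the shifted block. One could also remark that the same argument shows $j_{\mathcal{C}}(n)=j(n)$ when, for instance, all $c_p$ are taken $0$, but only the inequality is needed for the later application to bounding $G_{a_1,a_2}$.
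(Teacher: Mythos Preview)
Your proposal is correct and uses essentially the same idea as the paper: a single CRT-produced shift $t\equiv -c_p\pmod p$ converts the generalized avoidance condition into ordinary coprimality, so blocks of length $j(n)$ suffice. The paper phrases it contrapositively (starting from a bad block of length $j<j_{\mathcal{C}}(n)$ and shifting to obtain a block with no integer coprime to $n$), while you argue directly, but the substance is identical.
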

\begin{proof}
For any $j<j_{\mathcal{C}}(n)$, there exists an nonnegative integer $m$ such that for any $1\le i\le j$ there corresponds a prime factor $p_i$ of $n$ satisfying
$
m+i\equiv c_{p_i}\pmod{p_i}.
$
By the Chinese Remainder Theorem, there is a positive integer $K$  such that $K\equiv -c_p\pmod{p}$ for any $p\mid n$. We now consider the $j$ consecutive integers $m+K+1,\ldots, m+K+j$. Clearly, for any $1\le i\le j$ we have
$$
m+K+i\equiv c_{p_i}+(-c_{p_i})\equiv 0 \pmod{p_i}.
$$
Thus, by the definition we have $j(n)>j$, or $j(n)\ge j_{\mathcal{C}}(n)$.
\end{proof}

The following bound of $j(n)$ due to Iwaniec \cite{Iwaniec} is very famous in analytic number theory as the Jacobsthal function $j(n)$ lies in the heart of construction of large gaps between consecutive primes.
\begin{lemma}\label{lemma-Iwaniec}
We have $j(n)\ll (\log n)^2$, where the implied constant is abosulte.
\end{lemma}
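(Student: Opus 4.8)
This is the theorem of Iwaniec \cite{Iwaniec}, and the plan is to prove the bound by a sieve argument. Since $j(n)$ depends only on the set $\mathcal{P}_n$ of prime divisors of $n$, one may assume $n$ is squarefree. It then suffices to show that for a suitable absolute constant $C$, every interval $(M,M+H]$ of length $H=C(\log n)^2$ contains an integer coprime to $n$; so I would fix such an interval and aim to produce $x\in(M,M+H]$ with $\gcd(x,n)=1$.

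First I would split the prime divisors of $n$ at a threshold $z$, to be chosen of size $z\asymp\log n$, writing $n=n_1n_2$ with $n_1=\prod_{p\mid n,\,p\le z}p$ and $n_2=\prod_{p\mid n,\,p>z}p$. For the small part $n_1$, the fundamental lemma of the linear sieve, with level of distribution $D$, gives
\[
\#\{x\in(M,M+H]:\gcd(x,n_1)=1\}\ge c_1 H\prod_{\substack{p\mid n\\ p\le z}}\Big(1-\frac1p\Big)-\sum_{\substack{d\mid n_1\\ d\le D}}|r_d|,
\]
where $r_d=O(1)$ is the error in counting multiples of $d$ in the interval. By Mertens' theorem the product is at least $\prod_{p\le z}(1-1/p)\gg1/\log z$, so the main term is $\gg H/\log z$, while the remainder is $\le D$. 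For the large part $n_2$ a crude union bound suffices: the number of $x\in(M,M+H]$ divisible by some prime $p>z$ dividing $n$ is at most
\[
\sum_{\substack{p\mid n\\ p>z}}\Big(\frac Hp+1\Big)\le H\sum_{\substack{p\mid n\\ p>z}}\frac1p+\#\{p\mid n:p>z\}\ll\frac{H\log n}{z\log z}+\frac{\log n}{\log z},
\]
using that the primes $p>z$ dividing $n$ number at most $\log n/\log z$ (their product divides $n$) and each contributes at most $1/z$ to the reciprocal sum.

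Combining the two estimates, $\#\{x\in(M,M+H]:\gcd(x,n)=1\}$ is at least the sieve main term minus the large-prime count minus the sieve remainder. Choosing $z=B\log n$ with $B$ a large absolute constant makes the large-prime contribution $H\log n/(z\log z)=H/(B\log z)$ a small fraction of the main term $\gg H/\log z$, while $\log n/\log z=o(H/\log z)$ since $H\gg\log n$. It then remains only to balance the sieve level: one needs $s=\log D/\log z$ above the fundamental-lemma threshold so the main term dominates, together with $D$ small enough that the remainder $\le D$ stays below the main term $\asymp H/\log z$. With $z\asymp\log n$ this forces $H$ of size $(\log n)^2$, yielding a positive count and hence $j(n)\ll(\log n)^2$.

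The main obstacle is precisely this balancing act, and it is exactly where a naive first-moment bound breaks down: since $\sum_{p\mid n}1/p$ can be as large as $\log\log n>1$, a plain union bound over all prime factors gives nothing, and one genuinely needs the inclusion–exclusion cancellation of the sieve for the small primes. Squeezing out the clean exponent $2$ (rather than an extra $\log\log n$ lost in the trivial remainder bound $\le D$) requires controlling the sieve remainder more carefully; Iwaniec's argument accomplishes this via a delicate combinatorial sieve, which simultaneously yields the sharper $j(n)\ll\omega(n)^2(\log\omega(n))^2$, whence the stated bound follows through $\omega(n)\ll\log n/\log\log n$ \cite{Robin}. I would invoke \cite{Iwaniec} for this final refinement.
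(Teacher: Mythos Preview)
The paper does not prove this lemma at all: it is stated as a known result of Iwaniec and simply cited, with no argument given. Your proposal therefore goes well beyond what the paper does; you sketch the sieve strategy behind Iwaniec's theorem, correctly identify the key splitting at $z\asymp\log n$ and the balancing of the sieve remainder against the main term, and honestly acknowledge that the naive remainder bound $\le D$ only yields $j(n)\ll(\log n)^2\log\log n$, so that the sharp exponent $2$ requires Iwaniec's refined combinatorial treatment, for which you too fall back on the citation. In short, both the paper and your proposal ultimately rest on \cite{Iwaniec}; your sketch is a correct and informative outline of the method, but it is not (and does not claim to be) a self-contained proof of the stated bound.
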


\begin{proof}[Proof of the upper bound of $G_{a_1,a_2}$]
By Lemma \ref{lemma4} there are precisely $g(n)$ nonnegative integer solutions of $n=a_1x+a_2y$ which are
\begin{align}\label{residue}
\begin{cases}
    x=x_0-ka_2,\\
    y=y_0+ka_1,
\end{cases}    
\end{align}
where $0\le y_0<a_1$ satisfies $a_2y_0\equiv n\pmod{a_1}$ and $k=0, 1,\ldots, \left\lfloor\frac{n-a_2y_0}{a_1a_2}\right\rfloor+1$. In other words, there are at least $\left\lfloor n/(a_1a_2)\right\rfloor$ such $k$. If $\gcd(x,y)\neq 1$, then there is a prime factor $p$ of $n$ such that $p\mid  x$ and $p\mid  y$. Since $\gcd(a_1,a_2)=1$, we will separate the following arguments into three cases.

{\it Case I.} $p\nmid a_1$ and $p\nmid a_2$. In this case, by (\ref{residue}) we have
$$
k\equiv a_2^{-1}x_0\equiv -a_1^{-1}y_0:\equiv c_p\pmod{p}.
$$

{\it Case II.} $p\nmid a_1$ and $p\mid   a_2$. In this case, by (\ref{residue}) we have
$$
k\equiv -a_1^{-1}y_0:\equiv c_p\pmod{p}.
$$

{\it Case III.} $p\mid   a_1$ and $p\nmid a_2$. In this case, by (\ref{residue}) we have
$$
k\equiv a_2^{-1}x_0:\equiv c_p\pmod{p}.
$$
Now, we choose the set $\mathcal{C}$ to be $\{c_p:p\mid n\}$. Then any consecutive integers of lengeth $j_{\mathcal{C}}(n)$ contains at least one $k$ such that
$
k\not\equiv c_p\pmod{p}.
$
For such a $k$ we must have $\gcd(x,y)=1$, which means that if 
$
\left\lfloor n/(a_1a_2)\right\rfloor\ge j_{\mathcal{C}}(n)
$
then there exists some $k$ in (\ref{residue}) satisfying $\gcd(x,y)=1$. We now conclude from Lemmas \ref{lemma-jacob} and \ref{lemma-Iwaniec} that if 
\begin{align}\label{wang-lemma}
    \frac{n}{a_1a_2}\gg (\log n)^2,
\end{align}
then there is an expression of $n$ satisfying our requirement. From (\ref{wang-lemma}) it clearly follows that $G_{a_1a_2}\ll a_1a_2(\log a_1a_2)^2$.
\end{proof}

\begin{proof}[Proof of theorem \ref{thm:lowerbound}]
For any given $a_1>2$, we have $\varphi(a_1)\ge 2$. Thus, there are infinitely many primes $q$ such that
$$
q\not\equiv -1\pmod{a_1},
$$
thanks to Dirichlet's theorem in arithmetic progressions (see, e.g. \cite[Theorem 7.9]{Apostol}).
    It suffices to prove that for a given large prime $q>a_1$ with $a_1\nmid (q+1)$, we can find a suitable $a_2$ with $a_2\equiv -1\pmod{a_1}$ such that $G_{a_1,a_2}>qa_1a_2$ and $q\gg_{a_1} \log a_2$.
   For $a_2\equiv -1\pmod{a_1}$ it can be easily checked that a nonnegative solution of $a_1x+a_2y=qa_1a_2+1$ is
   \begin{align*}
   \begin{cases}
   x=\frac{a_2+1}{a_1}+(q-1)a_2,\\
   y=a_1-1.
   \end{cases}
   \end{align*}
 Hence, all the nonnegative solutions of $a_1x+a_2y=qa_1a_2+1$ are
    \begin{align}\label{7-19-1}
    x_{\ell}=\frac{a_2+1}{a_1}+(q-\ell)a_2,~ \ y_{\ell}=\ell a_1-1,\quad  \ell=1,2,\dots,q.
    \end{align}
    We will construct a suitable $a_2$ with $a_2\equiv -1\pmod{a_1}$ such that  $\gcd(x_{\ell},y_{\ell})>1$ for all $\ell=1,2,\dots,q$ by Chinese Remainder Theorem, from which our theorem follows.

    Since $q>a_1$ is prime, we see that there is exactly one positive integer in $[1,q]$, say $\ell_0$ such that $q\mid \ell_0a_1-1.$ Let $y_{\ell_0}=\ell_0a_1-1$. Since $q\not\equiv -1\pmod{a_1}$ by our choice of $q$, we see that $\ell_0a_1-1\neq q$ and $\ell_0a_1-1<q^2$. We now choose a prime factor of $\ell_0 a_1-1$ that is different to $q$, say $p_0$, then $p_0$ is coprime to $qa_1$, so $p_0\nmid 1+(q-\ell_0)a_1$. This together with Chinese Remainder Theorem implies that we can choose $a_2$ such that
    \begin{align}\label{7-19-2}
    \begin{cases}
    a_2\equiv -1 \pmod{a_1},\\
    a_2(1+(q-\ell_0)a_1)\equiv -1\pmod{p_0}.
    \end{cases}
    \end{align}
    Recall that
    $x_{\ell_0}=\frac{a_2+1}{a_1}+(q-\ell_0)a_2$
    from (\ref{7-19-1}). We deduce that
    $p_0\mid x_{\ell_0}$ from (\ref{7-19-2}).

    Now we continue the construction of $a_2$ such that $\gcd(x_\ell, y_\ell)\neq 1$ for all $\ell=1,2,\dots,q$.
    We will do it by induction. If $p_0\mid a_1-1=y_1$, then we claim that $p_0\mid x_1$. In fact, Since $p_0\mid \ell_0 a_1-1$ and $p_0|a_1-1$, we have $p_0\mid \ell_0-1$ from which we deduced that
    $$
    p_0\mid a_2+1+(q-\ell_0)a_1a_2+(\ell_0-1)a_1a_2,
    $$
    by combining with (\ref{7-19-2}).
    Noting that
    $$
    a_2+1+(q-\ell_0)a_1a_2+(\ell_0-1)a_1a_2=a_2+1+(q-1)a_1a_2=a_1x_1,
    $$
    we conclude that $p_0\mid x_1$. If $p_0\nmid y_1$, then we choose a prime factor of $y_1$, say $p_1$. Since $p_1$ is coprime to $qa_1$, so $p_1$ is coprime to $1+(q-1)a_1$, and hence by Chinese Remainder Theorem, we can choose $a_2$ such that
    \begin{align}\label{7-19-3}
    \begin{cases}
    a_2\equiv -1\pmod{a_1},\\
    a_2(1+(q-\ell_0)a_1)\equiv -1\pmod{p_0},\\
    a_2(1+(q-1)a_1)\equiv -1\pmod{p_1}.
    \end{cases}
    \end{align}
    By the second congruence of (\ref{7-19-3}) we have $p_1\mid x_1$.

    Repeating the procedure above, suppose that we have chosen suitable $a_2$ such that $p_i\mid \gcd(x_i,y_i)$ for $i=1,2,\dots, \ell-1$. It is worth mentioning that $p_i$ may not be different here. We consider the case $\ell\neq \ell_0$. If $y_{\ell}$ is divided by some $p_i$ for $i\in\{0,1,\dots,\ell-1\}$, then we put $p_{\ell}=p_i$ and by the same reason as above, we have $p_{\ell}\mid x_{\ell}$. If $y_{\ell}$ is coprime to all $p_0,p_1,\dots,p_{\ell-1}$, then we choose $p_{\ell}$ to be a prime factor of $y_{\ell}$. By our construction of $\ell_0$, we see that $p_{\ell}$ is coprime to $pa_1$, so $p_\ell\nmid 1+(q-\ell)a_1$. Then by Chinese Remainder Theorem, we can choose $a_2$ such that $p_{i}\mid x_{i}$ for all $1\le i\le \ell$. Therefore, we would find out a suitable $a_2$ satisfying our requirement by induction on $\ell$.

    Since $q$ is fixed, such procedure will stop in finite steps, and by our construction, we have $p_{\ell}\mid \gcd(x_{\ell},y_{\ell})$ for all $\ell=1,2,\dots,q$, where $p_{\ell_0}=p_0$. At last, one notices from the prime number theorem that
    \[
    a_2\le a_1p_1p_2\dots p_{q}\leq a_1\prod_{p\le a_1q}p= a_1e^{(1+o(1))a_1q},
    \]
    where the $p$'s in the product represent primes.
    Hence, we have
    $$
    q\ge \big(1+o(1)\big)\frac{\log a_2-\log a_1}{a_1}\gg \log a_2,
    $$
    where the implied constant depends at most on $a_1$, proving our theorem.
\end{proof}

\section{Proof of Theorem \ref{thm2}, related results and unsolved problems}
\begin{proof}[Proof of Theorem \ref{thm2}]
    Since $4a-2$ can be only written as
    $$
    4a-2=2\cdot (a-1)+a\cdot 2=2\cdot(2a-1)+a\cdot 0,
    $$ we see that $4a-2$ can not be written as $2x_1+ax_2$ with $x_1,x_2\in \mathbb{Z}_{\geq 0}$ and $\gcd(x_1,x_2)=1$, that is $G_{2,a}\geq 4a-2$. On the other hand, for any $n>4a-2$, if $n$ is odd, then
    $$
    n=2\cdot \frac{n-a}{2}+a\cdot 1,
    $$
    is an admissible expression.
    If $n\equiv 2\pmod{4}$, then
    $$
    n=2\cdot \frac{n-4a}{2}+a\cdot 4,
    $$
    is admissible.
    If $n\equiv 0\pmod{4}$, then
    $$
    n=2\cdot \frac{n-2a}{2}+a\cdot 2,
    $$
    is an admissible expression.
\end{proof}

Let $k$ be a given positive integer. We are now interested in the prime powers $p^k\le g_{a_1,a_2}$ of the form 
$$
a_1x_1+a_2x_2 \quad (x_1,x_2\in \mathbb{Z}_{\ge 0}).
$$
Let $1<a_1<a_2$ be integers with $\gcd(a_1,a_2)=1$. Extending the result of Ding, Zhai and Zhao \cite{DingZhaiZhao}, recently Huang and Zhu \cite{HuangZhu} proved
$$
\pi_{k,a_1,a_2}\!:=\#\Big\{p^k\le g_{a_1,a_2}: p^k=a_1x_1+a_2x_2, x_1,x_2\in \mathbb{Z}_{\ge 0}\Big\}\sim \frac{k}{k+1}\frac{(g_{a_1,a_2})^{1/k}}{\log g_{a_1,a_2}},
$$
as $a_1\rightarrow \infty$. One notices from their result that $\pi_{k,a_1,a_2}>0$ provided that $a_1$ is sufficiently large. The result of Dai, Ding and Wang \cite{DZZ} (i.e., the solution of Conjecture \ref{conjecture1}) showed that
$
\pi_{1,a_1,a_2}=0
$
only for the pairs $(a_1,a_2)=(2,3)$.
In view of Conjecture \ref{conjecture1}, one naturally considers a similar problem. We wish to determine all the pairs $(a_1,a_2)$ such that $\pi_{2,a_1,a_2}=0$. The following theorem reflects quite different features between the situations of $k=1$ and $k=2$.

\begin{thm}\label{Ding-Wang-Zhang-new}
For any nonnegative integer $g$ we have
\[\pi_{2,6,6g+5}=\pi_{2,8,8g+7}=\pi_{2,12,12g+11}=\pi_{2,24,24g+23}=0.\]
\end{thm}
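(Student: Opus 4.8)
The plan is to combine Sylvester's identity $g_{a_1,a_2}=a_1a_2-a_1-a_2$ with two features shared by all four families: in each case $a_2\equiv -1\pmod{a_1}$, and each modulus $a_1\in\{6,8,12,24\}$ has the property that $m^2\equiv 1\pmod{a_1}$ whenever $\gcd(m,a_1)=1$. The latter is the statement that the Carmichael function satisfies $\lambda(a_1)\mid 2$ for these four values, which one checks by inspection of the reduced residue systems; it is in fact the reason these particular families appear, since $6,8,12,24$ all divide $24$, the largest modulus with this property. Recall that $\pi_{2,a_1,a_2}$ counts the primes $p$ with $p^2\le g_{a_1,a_2}$ admitting a representation $p^2=a_1x_1+a_2x_2$ with $x_1,x_2\in\mathbb{Z}_{\ge 0}$.

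First I would treat a prime $p$ with $p\nmid a_1$; since the only prime divisors of $6,8,12,24$ are $2$ and $3$, this is exactly the case $p\ge 5$. Suppose $p^2=a_1x_1+a_2x_2$ with $x_1,x_2\ge 0$. Reducing modulo $a_1$ and using $a_2\equiv-1\pmod{a_1}$ together with $p^2\equiv 1\pmod{a_1}$ forces $x_2\equiv -1\equiv a_1-1\pmod{a_1}$, hence $x_2\ge a_1-1$. Consequently
\[
p^2\ge a_2x_2\ge (a_1-1)a_2=a_1a_2-a_2=g_{a_1,a_2}+a_1>g_{a_1,a_2},
\]
so such a $p^2$ never lies in the range counted by $\pi_{2,a_1,a_2}$.

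Next I would dispose of the primes dividing $a_1$, that is $p\in\{2,3\}$ and $p^2\in\{4,9\}$. Since $\min(a_1,a_2)\ge 5$ and $a_1+a_2\ge 11>9$, at most one of $x_1,x_2$ can be positive in a putative representation $p^2=a_1x_1+a_2x_2$, so one would need $a_1\mid p^2$ or $a_2\mid p^2$. Neither holds: no element of $\{6,8,12,24\}$ divides $4$ or $9$, while $a_2\ge 5$ gives $a_2\nmid 4$, and $a_2\neq 9$ because $9\equiv-1\pmod{a_1}$ would require $a_1\mid 10$. Hence $4$ and $9$ are not representable and contribute nothing, and combining the two cases yields $\pi_{2,a_1,a_2}=0$ in all four families.

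I do not anticipate a genuine obstacle: once the two structural observations are isolated, the argument is a single congruence plus Sylvester's formula, together with a trivial check for $p^2\in\{4,9\}$. The only informative point is the appearance of exactly these moduli, and I would round off the proof with a remark that the scheme breaks down for any other $a_1$, since then some reduced residue squares to something other than $1$ modulo $a_1$, which would permit an admissible value of $x_2$ smaller than $a_1-1$ and hence a small representable square.
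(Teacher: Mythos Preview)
Your proof is correct and follows essentially the same route as the paper's: both arguments exploit that $a_2\equiv-1\pmod{a_1}$ and that every reduced residue squares to $1$ modulo each $a_1\in\{6,8,12,24\}$ (the paper phrases this via $a_1\mid 24$ and $p^2\equiv 1\pmod{24}$ for $p\ge 5$), then reach a contradiction between the congruence $p^2\equiv -x_2\pmod{a_1}$ and the size constraint forced by $p^2\le g_{a_1,a_2}$. Your separate treatment of $p\in\{2,3\}$ is in fact a bit more careful than the paper's, which justifies $p\ge 5$ by the inequality $p^2\ge a_1+a_2$ that tacitly assumes both coordinates are positive.
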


\begin{proof}
    Let $1<a<b$ be two relatively prime integers and $p$ a prime number with
    $$
    p^2<ab-a-b.
    $$
    If there are nonnegative integers $x,y$ such that $p^2=ax+by$, then $y\leq a-2$. For the case $a=6,8,12,24$ and $b=6g+5,8g+7,12g+11,24g+23$ respectively, it is not hard to see that $p\geq 5$. Actually, for these cases we clearly have
    $$
    p^2\ge a+b\ge 11.
    $$
  By classifying modulo $24$, we know that
    \begin{align}\label{prime-1}
    p^2\equiv 1\pmod{24}.
    \end{align}
    On the other hand, we clearly have $b\equiv -1\pmod{a}$, from which it follows that
    \begin{align}\label{prime-2}
    p^2=ax+by\equiv by \equiv -y\not\equiv 1\pmod{a},
    \end{align}
    provided that $y\leq a-2$. Hence, we have $p^2\not\equiv 1\pmod{24}$ from (\ref{prime-2}) and $a\mid 24$, which is certainly a contradiction with (\ref{prime-1}).
\end{proof}

It is worth here mentioning that
$$
\pi_{2,40,71}=\pi_{2,40,239}=\pi_{2,40,391}=\pi_{2,40,431}=\pi_{2,40,751}=\pi_{2,40,791}=0.
$$
Mathematical experiments then indicate the following conjecture.

\begin{conjecture}\label{conj-19-1}
Let $a_2>a_1>40$ be two integers with $\gcd(a_1,a_2)=1$. Then we have
$$
\pi_{2,a_1,a_2}>0.
$$
Furthermore, there are only finitely many pairs $(a_1,a_2)$ such that $\pi_{2,a_1,a_2}=0$ apart from the ones given in Theorem \ref{Ding-Wang-Zhang-new}.
\end{conjecture}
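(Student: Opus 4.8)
The plan is to split the conjecture into its two assertions and reduce each, through the arithmetic of representability below the Frobenius number, to a prime-counting statement in a short window together with an elementary fact about quadratic residues; finitely many residual pairs are then dispatched by computation. Write $g=g_{a_1,a_2}=a_1a_2-a_1-a_2$, and for a residue class $r\pmod{a_1}$ let $0\le y_r<a_1$ be the unique integer with $a_2y_r\equiv r\pmod{a_1}$. Exactly as in Lemma~\ref{lemma4}, an integer $n\equiv r\pmod{a_1}$ is of the shape $a_1x+a_2y$ with $x,y\ge0$ if and only if $n\ge a_2y_r$; hence a prime $p$ is counted by $\pi_{2,a_1,a_2}$ precisely when $a_2y_{p^2}\le p^2\le g$, where $y_{p^2}$ depends only on $p^2\bmod a_1$. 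The interval $[a_2y_r,g]$ is nonempty iff $y_r\le a_1-2$, has length $\asymp(a_1-1-y_r)a_2$, and has length $\gg a_1a_2$ as soon as $y_r\le(1-\delta)a_1$ for a fixed $\delta>0$. Since $p^2\bmod a_1$ runs through all unit squares as $p$ runs through primes not dividing $a_1$, it therefore suffices for $\pi_{2,a_1,a_2}>0$ to produce a unit $u$ with $y_{u^2}\le(1-\delta)a_1$ (equivalently, a quadratic residue in the set $a_2\cdot\{1,\dots,\lfloor(1-\delta)a_1\rfloor\}\bmod a_1$) together with a prime $p\equiv u\pmod{a_1}$ in the window $[\sqrt{a_2y_{u^2}},\sqrt{g}\,]$, which has length $\gg_{\delta}\sqrt{a_1a_2}$.

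\textbf{Positivity for $a_1>40$.} The asymptotic of Huang and Zhu already gives $\pi_{2,a_1,a_2}\to\infty$ as $a_1\to\infty$; granting (or establishing) that it is uniform in $a_2>a_1$, one first makes the threshold effective, so $\pi_{2,a_1,a_2}>0$ for all $a_1\ge A_0$ with $A_0$ explicit. For $40<a_1<A_0$ one argues directly: the elementary fact discussed below --- for every such $a_1$ and every unit $b\bmod a_1$ there is a unit $u$ with $b^{-1}u^2\bmod a_1\le a_1/2$ --- combined with the prime number theorem in arithmetic progressions to the fixed modulus $a_1$ (in explicit form under GRH, in the spirit of Theorem~\ref{thm:meanvalue}, or via an effective unconditional version) yields $\gg\sqrt{a_1a_2}/(\varphi(a_1)\log a_1a_2)$ primes $p\equiv u\pmod{a_1}$ in the relevant window, hence $\pi_{2,a_1,a_2}>0$ once $a_1a_2$ exceeds an explicit bound. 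What remains is a finite check over $40<a_1<A_0$ and $a_1<a_2$ below that bound.

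\textbf{Finiteness.} Running the same count with $a_1$ fixed and $a_2\to\infty$, the only residue class that can fail to contain a representable number below $g$ is $r\equiv-a_2\pmod{a_1}$, so whenever $a_1\notin\{6,8,12,24\}$ --- these being the moduli, other than $1,2,3,4$ where a prime $p\mid a_1$ furnishes a representable $p^2$, for which $(\mathbb{Z}/a_1\mathbb{Z})^\times$ has exponent $2$ and hence only the single unit square $1$ is available --- some unit square $u^2$ has $y_{u^2}\le a_1-2$, and the prime number theorem in arithmetic progressions mod $a_1$ then forces $\pi_{2,a_1,a_2}\to\infty$, so (effectively) $\pi_{2,a_1,a_2}>0$ for $a_2\ge B(a_1)$ with $B(a_1)$ computable. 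Combined with the positivity part, any pair with $\pi_{2,a_1,a_2}=0$ and $(a_1,a_2)$ outside the families of Theorem~\ref{Ding-Wang-Zhang-new} must have $a_1\le A_0$ and $a_2<B(a_1)$; this is a finite union of finite sets, which gives the second assertion once the constants and the residual search are carried out.

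\textbf{The main obstacle.} The delicate point is the elementary input: that for every $a_1>40$ and every unit $b\bmod a_1$ some quadratic residue lies in $b\cdot\{1,\dots,\lfloor a_1/2\rfloor\}$. The unit squares form a subgroup of index $2^{\omega(a_1)}$ up to a bounded $2$-adic factor, so for $a_1$ with many prime divisors this set is sparse and a crude pigeonhole does not suffice --- and the statement genuinely fails for $a_1\in\{6,8,12,24\}$ (for all $a_2\equiv-1\pmod{a_1}$) and, for bounded $a_2$, for the six sporadic pairs at $a_1=40$. One will need a character-sum estimate showing that $b\cdot\{1,\dots,\lfloor a_1/2\rfloor\}$ meets every coset of the squares once $a_1$ is large, made explicit enough to take effect just past $40$, supplemented by a finite computation over the transitional range and over $a_1=40$; and one must check that the residue so produced has $y_r$ of genuine size $\le(1-\delta)a_1$ rather than merely $\le a_1-2$, so that the window is long enough for the prime-counting step. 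Reconciling these two demands --- and, for an unconditional statement, controlling the $a_1$-dependence in the prime number theorem in arithmetic progressions, which is precisely where GRH- or Bombieri--Vinogradov-type input is needed --- is where I expect the real difficulty to lie.
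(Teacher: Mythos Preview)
The statement you are addressing is presented in the paper as Conjecture~\ref{conj-19-1}, not as a theorem: the authors offer no proof, only the numerical evidence listed just before it (the six sporadic pairs at $a_1=40$) together with the infinite families of Theorem~\ref{Ding-Wang-Zhang-new}. There is therefore no paper proof to compare against; your proposal is an attempted resolution of an open problem.

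On its own merits, your outline identifies the right mechanism --- representability of $p^2$ below $g_{a_1,a_2}$ reduces, via Lemma~\ref{lemma4}, to locating a quadratic residue $u^2\bmod a_1$ with small $y_{u^2}$ and then a prime $p\equiv u\pmod{a_1}$ in a window of length $\asymp\sqrt{a_1a_2}$ --- but the gaps you yourself flag are genuine and, at present, decisive. First, the Huang--Zhu asymptotic is stated only as $a_1\to\infty$; you would need an effective version with explicit dependence on both $a_1$ and $a_2$, and the underlying prime-counting in progressions mod $a_1$ is (via possible Siegel zeros) ineffective unconditionally. Second, your ``elementary input'' --- that for every $a_1>40$ and every unit $b$ the set $b\cdot\{1,\dots,\lfloor a_1/2\rfloor\}\bmod a_1$ contains a unit square --- is not elementary at all: the unit squares have index of order $2^{\omega(a_1)}$, so for highly composite $a_1$ one needs a Burgess-type character-sum estimate, and making any such estimate bite precisely at the threshold $a_1=41$ is well beyond current techniques. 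Third, the ``finite check'' you defer to is unbounded unless every preceding constant is made numerically explicit, which compounds the first two difficulties. In short, your proposal is a reasonable heuristic roadmap rather than a proof, and the obstacles you name are exactly why the authors recorded this as a conjecture.
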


We could further consider the pairs $(a_1,a_2)$ such that $\pi_{k,a_1,a_2}=0$ for any given $k$. Here, perhaps we have some more interesting problems involving $\pi_{k,a_1,a_2}$. Let $g(k)$ be the least positive integer such that for any pair $(a_1,a_2)$ with $g(k)< a_1<a_2$ there is a prime power $p^k\le g_{a_1,a_2}$ satisfying $p^k=a_1x+a_2y,~x_1,x_2\in \mathbb{Z}_{\ge 0}$. The function $g(k)$ is well-defined, thanks to the theorem of
Huang and Zhu \cite{HuangZhu}.
Clearly we have $g(k)\ge (\sqrt{2})^k$.
We now pose a few problems below for further research.

\begin{problem}\label{prob-19-4}
Finding the (at least $\log$) asymptotic formula of $g(k)$ if it exists.
\end{problem}

\begin{problem}\label{prob-19-5}
Is it true that
$$
\lim_{k\rightarrow\infty}\frac{g(k+1)}{g(k)}=1?
$$
\end{problem}

\begin{problem}\label{prob-19-6}
Is it true that
$g(k+1)\ge g(k)$
for all sufficiently large $k$?
\end{problem}

It seems interesting to make the following conjecture.

\begin{conjecture}\label{conjecture-3-1}
Let $M>0$ be any given number. Then we have
$
g(k)>M^k
$
for all sufficiently large $k$.
\end{conjecture}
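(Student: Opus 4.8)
The plan is to produce, for every fixed $M$ and every $k\ge k_0(M)$, a coprime pair $(a_1,a_2)$ with $M^k<a_1<a_2$ and $\pi_{k,a_1,a_2}=0$; the existence of such a pair makes $a_1$ ``bad'' in the definition of $g(k)$, so that $g(k)>a_1>M^k$. The input I would use is the description of representability implicit in Lemma \ref{lemma4}: if $1<a_1<a_2$, $\gcd(a_1,a_2)=1$ and $\gcd(p,a_1)=1$, then $p^k$ is \emph{not} of the form $a_1x+a_2y$ with $x,y\in\mathbb{Z}_{\ge 0}$ exactly when $y_0(p):=a_2^{-1}p^k\bmod a_1$, which lies in $[1,a_1)$, satisfies $a_2\,y_0(p)>p^k$. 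Accordingly I would first fix $m=m(M)$ with $p_{m+1}>2M^2$ (where $p_j$ denotes the $j$-th prime) and a prime $a_1\in(M^k,2M^k)$, which exists for $k\ge k_0(M)$ by Bertrand's postulate and automatically satisfies $a_1>p_m$, and then search for an integer $a_2$ with $\gcd(a_2,a_1)=1$ and $\tfrac14 p_{m+1}^k<a_1a_2<\tfrac12 p_{m+1}^k$. The second condition forces $p_m^k<g_{a_1,a_2}<p_{m+1}^k$ once $k$ is large, so the prime $k$-th powers not exceeding $g_{a_1,a_2}$ are exactly the $p^k$ with $p\le p_m$, and the only prime factor of $a_1$, namely $a_1$ itself, exceeds $p_m$, hence contributes a $k$-th power larger than $g_{a_1,a_2}$ and is irrelevant. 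Thus $\pi_{k,a_1,a_2}=0$ follows once $a_2\,y_0(p)>p^k$ for all $p\le p_m$; since $a_2>p_{m+1}^k/(4a_1)$ this is implied by the finitely many ``wide window'' conditions $y_0(p)>\tau a_1$ for $p\le p_m$, with $\tau:=4(p_m/p_{m+1})^k\to 0$.

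The second step is a counting argument producing a suitable $a_2$. For the fixed prime $a_1$, the admissible values of $a_2$ form an interval $I$ of length $\asymp p_{m+1}^k/a_1$, which is $\gg a_1$ because $p_{m+1}>2M^2$ and $a_1<2M^k$. As $a_2$ runs through the integers of $I$ prime to $a_1$, each residue class modulo $a_1$ is met $|I|/a_1+O(1)$ times, so $a_2^{-1}\bmod a_1$ — and hence $y_0(p)=a_2^{-1}p^k\bmod a_1$, since multiplication by the unit $p^k\bmod a_1$ only permutes $(\mathbb{Z}/a_1\mathbb{Z})^\times$ — runs almost uniformly over the units, attaining each value $|I|/a_1+O(1)$ times. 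Therefore, for a fixed prime $p\le p_m$, the number of admissible $a_2$ with $y_0(p)\le\tau a_1$ is at most $(\tau a_1+1)\bigl(|I|/a_1+O(1)\bigr)\ll\tau|I|+\tau a_1$, so the number of ``bad'' $a_2$ (those violating some window condition) is $\ll m\tau|I|+m\tau a_1$, whereas the total number of admissible $a_2$ is $(1-a_1^{-1})|I|+O(1)\gg|I|$ because $a_1$ is prime. Since $\tau$ decays exponentially in $k$ while $|I|\gg a_1$, for $k\ge k_0(M)$ the bad count is $<\tfrac12|I|$, so a good $a_2$ exists; the resulting pair has $\pi_{k,a_1,a_2}=0$ and $a_1>M^k$, giving $g(k)>M^k$.

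I expect the main obstacle to be organizational: one must check that all the size constraints — $a_1<a_2$, the interval $I$ being genuinely longer than $a_1$, the inequalities $p_m^k<g_{a_1,a_2}<p_{m+1}^k$, and the exponentially small $m\tau$ together with the aggregate $O(m\tau a_1)$ from the $O(1)$ errors in the residue count being dominated by $|I|$ — hold \emph{simultaneously} for all $k\ge k_0(M)$, and that the step from ``$p^k$ unrepresentable for every base $p\le p_m$'' to ``$\pi_{k,a_1,a_2}=0$'' is clean, which it is precisely because $a_1$ was taken prime and larger than $p_m$. A tempting alternative, closer to the mechanism of Theorem \ref{Ding-Wang-Zhang-new}, is to prescribe $a_2\equiv-1\pmod{a_1}$ and then to force $p^k\equiv1\pmod{a_1}$ for all relevant $p$, i.e.\ to take $a_1$ a large divisor of $\gcd_{p\le p_m}(p^k-1)$; but the existence of such an $a_1$ above $M^k$ depends delicately on the divisor structure of $k$ — for instance this gcd is tiny when $k$ is prime — so the equidistribution-over-an-interval route seems the more robust way to handle \emph{all} sufficiently large $k$.
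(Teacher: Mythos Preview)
The statement you are attempting to prove is listed in the paper as a \emph{conjecture}, not a theorem; the paper offers no proof and explicitly poses it for further research. So there is no ``paper's own proof'' to compare against.

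That said, your argument looks sound and, as far as I can see, actually settles the conjecture. The construction is clean: with $m$ fixed so that $p_{m+1}>2M^2$ and $a_1$ a prime in $(M^k,2M^k)$, the interval $I$ for $a_2$ has length $|I|=\tfrac14 p_{m+1}^k/a_1$, and the crucial ratio governing the counting is
\[
\frac{|I|}{\tau a_1}\;\gg\;\Bigl(\frac{p_{m+1}^2}{M^2 p_m}\Bigr)^{k}.
\]
From $p_{m+1}>2M^2$ one gets $p_{m+1}^2>2M^2 p_{m+1}>2M^2 p_m$, so the base exceeds $2$ and this ratio grows like $2^k$; hence $m\tau a_1=o(|I|)$, while $m\tau\to 0$ handles the main term. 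One small bookkeeping point: expanding $(\tau a_1+1)(|I|/a_1+O(1))$ produces, besides $\tau|I|+O(\tau a_1)$, an extra $|I|/a_1+O(1)$ term that you dropped; summed over the $m$ primes this contributes $m|I|/a_1+O(m)=o(|I|)$ since $a_1>M^k\to\infty$, so it does no harm. The remaining checks (that $a_1<a_2$, that $a_1\ge p_{m+1}$ so $a_1^k>g_{a_1,a_2}$, and that $p_m^k<g_{a_1,a_2}<p_{m+1}^k$) all follow once $k$ is large enough in terms of $M$.

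In short: the paper has nothing to compare this to, but your proposal appears to be a correct proof of what the authors left as an open conjecture.
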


Let $1<a_1<a_2$ be integers with $\gcd(a_1,a_2)=1$. Another different perspective of this topic is the following problem. Let $\ell_{a_1,a_2}$ be the longest length of consecutive integers in the interval $[0,g_{a_1,a_2}]$ such that none of the which can be written as 
$$
a_1x_1+a_2x_2 \quad (x_1,x_2\in \mathbb{Z}_{\ge 0}).
$$
Clearly, we have $\ell_{a_1,a_2}=a_1-1$. In fact, none of the integers in $[1, a_1-1]$ has the desired expression. However, for any $m\ge 0$ the consecutive integers $m, m+1, \ldots, m+a_1-1$ contain a multiple of $a_1$, and this multiple of $a_1$ possesses the desired expression. Now, let  $L_{a_1,a_2}$ be the longest length of consecutive integers in the interval $[0,G_{a_1,a_2}]$ such that none of the whose elements can be written in the form 
\begin{align*}
n=a_1x_1+a_2x_2, \quad (x_1, x_2\in\mathbb{Z}_{\ge 0},~\gcd(x_1, x_2)=1).
\end{align*}
The following problem could be asked.
\begin{problem}\label{consecutive}
Finding the closed form of $L_{a_1,a_2}$.
\end{problem}

\section*{Acknowledgments}
We thank Changhao Chen, Zikang Dong, Nankun Hong, Honghu Liu, Yu-Chen Sun, Yitao Wu, Yanbin Zhang, Lilu Zhao and Tengyou Zhu for their interest in this article.

Y. D. is supported by National Natural Science Foundation of China  (Grant No. 12201544) and China Postdoctoral Science Foundation (Grant No. 2022M710121).
W. W is supported by  China
Postdoctoral Science Foundation (Grant No. 2024M763477). H. Z. is supported by Fundamental Research Funds for the Central Universities (Grant No. 531118010622), National Natural Science Foundation of China (Grant No. 1240011979) and Hunan Provincial Natural Science Foundation of China (Grant No. 2024JJ6120).

\end{document}